\newcommand{\supp}{\operatorname{supp}}
\renewcommand{\div}{\operatorname{div}}
\newcommand{\Rr}{{\mathbb{R}}}
\def\d{{\rm d}}
\def\dy{{\rm d}y}
\def\dt{{\rm d}t}
\def\leq{\leqslant}
\def\geq{\geqslant}
\numberwithin{equation}{section}
\newtheoremstyle{thmlemcorr}{10pt}{10pt}{\itshape}{}{\bfseries}{.}{10pt}{{\thmname{#1}\thmnumber{
#2}\thmnote{ (#3)}}}
\newtheoremstyle{thmlemcorr*}{10pt}{10pt}{\itshape}{}{\bfseries}{.}\newline{{\thmname{#1}\thmnumber{
\newtheoremstyle{defi}{10pt}{10pt}{\itshape}{}{\bfseries}{.}{10pt}{{\thmname{#1}\thmnumber{
#2}\thmnote{ (#3)}}}
\newtheoremstyle{remexample}{10pt}{10pt}{}{}{\bfseries}{.}{10pt}{{\thmname{#1}\thmnumber{
#2}\thmnote{ (#3)}}}
\newtheoremstyle{ass}{10pt}{10pt}{}{}{\bfseries}{.}{10pt}{{\thmname{#1}\thmnumber{
A#2}\thmnote{ (#3)}}}
\theoremstyle{thmlemcorr}
\newtheorem{theorem}{Theorem}
\numberwithin{theorem}{section}
\theoremstyle{thmlemcorr*}
\newtheorem{theorem*}{Theorem}
\newtheorem{lemma*}[theorem]{Lemma}
\newtheorem{corollary*}[theorem]{Corollary}
\newtheorem{proposition*}[theorem]{Proposition}
\newtheorem{problem*}[theorem]{Problem}
\newtheorem{conjecture*}[theorem]{Conjecture}
\theoremstyle{defi}
\newtheorem{hyp}{Assumption}
\theoremstyle{remexample}
\newtheorem{remark}[theorem]{Remark}
\newtheorem{example}[theorem]{Example}
\newtheorem{teo}[theorem]{Theorem}
\newtheorem{pro}[theorem]{Proposition}
\theoremstyle{ass}
\begin{document}
\title[Multi-population Opinion Dynamics Model]{Multi-population Opinion Dynamics Model}
\author{Tigran Bakaryan$^*$}
\address[T. Bakaryan]{
	University of Illinois at Urbana-Champaign, USA}
\email{bakaryt@illinois.edu}
\author{Yuliang Gu$^*$ }
\address[Y. Gu]{ University of Illinois at Urbana-Champaign, USA}
\email{yuliang3@illinois.edu}

\author{Naira Hovakimyan}
\address[N. Hovakimyan]{
	University of Illinois at Urbana-Champaign, USA}
\email{nhovakim@illinois.edu}

\author{Tarek Abdelzaher}
\address[T. Abdelzaher]{
	University of Illinois at Urbana-Champaign, USA}
\email{ zaher@illinois.edu}

\author{Christian Lebiere}
\address[C. Lebiere]{
	Carnegie Mellon University,  Pittsburgh, USA}
\email{cl@cmu.edu}

\def\thefootnote{*}\footnotetext{These authors contributed equally. }\def\thefootnote{\arabic{footnote}}

\keywords{Opinion Dynamics, Bounded Confidence, Multi-Population Model, Mean-Field Limit, System of Continuity Equations.}

\thanks{This work was supported by DOD HQ00342110002.
}

\maketitle

\begin{abstract}

We introduce multi-population opinion dynamics models linked to the bounded confidence model, aiming to explore how interactions between individuals contribute to the emergence of consensus, polarization, or fragmentation.
Existing models either neglect agent similarities, sacrificing accuracy for scalability, or prioritize accuracy by introducing agent-wise connections, constraining scalability. Our proposed model captures similarities between agents in scalable matter.

In our setting, agents similarities are defined by their group affiliations. Specifically, each sub-population is characterized by its distribution, and the closeness between two sub-populations is measured by the Wasserstein distance of their corresponding distributions. 
This leads to two mutually connected dynamics: micro, the individual-based dynamics, and the macro, the distribution-based one.  The individual-wise interactions take into account the population-wise interactions (similarities), and the population-wise interactions are updated based on the individual-wise interactions.

We have proven the well-posedness of our models. Additionally, we conducted several simulations to mimic certain complex social phenomena. 
\end{abstract}

\section{Introduction} 
The advancement of technologies, \cite{MobileSN}, increased the accessibility of online social networks, such as Facebook, X (Twitter), WeChat, and LinkedIn, in our daily lives. Hence, the evolution of opinions within these platforms has become a crucial area of study.  Understanding how individuals influence the decisions of others in social networks has recently emerged as a significant and widely discussed research area, \cite{PredictingSN, Market}. One of the main challenges of working with online social networks (OSNs) is the huge number of agents (users). In this paper, to study the opinion evolution in OSNs, we follow the common approach of "self-organized" models.
In these models, the system's global behavior results from accumulating local interactions. Particularly,  we focus our study on the opinion dynamics models.

Opinion dynamics models aim to describe how opinions form and spread within a population, \cite{degroot1974reaching, deffuant2000mixing, friedkin1990social, rainer2002opinion}. In these models, each individual (or 'agent') of a given population holds an opinion on a particular topic, quantified by a real number. Furthermore, the models provide rules for how these individuals interact with each other and how these interactions influence their opinions.

Before introducing our opinion dynamics models, we briefly overview the bounded confidence (BC) models. The BC models were introduced in \cite{rainer2002opinion, deffuant2000mixing}, and the motivation behind them comes from the concepts of "biased assimilation", \cite{ConfirmBiasedAA}, and "confirmation bias",  \cite{ConfirmBaisAl}. These theories suggest that individuals tend to seek out sources that align with their initial opinions and are more heavily influenced by others whose opinions are close to their own. In other words,  people have tendency to search for information that supports their existing beliefs and ignore the ones that contradict them,  \cite{confbias2022}.
This phenomenon has been widely observed in the Web domain, see \cite{confbiasDataWeb, Whit2013ConfBias} and the references therein. Furthermore, the features of  BC  models are explored in real-world studies of opinion formations see, for example, \cite{RealDataHK, RealDatatw}. Having this, we focus our study on the models related to BC models. 

The bounded confidence models were proposed to understand the opinion formation of the population (to understand macroscopic behavior); that is,  how individual interactions lead to the formation of consensus, polarization, or fragmentation. In the BC models, each individual within a group holds a specific opinion, which the individual may adjust based on the interaction with others, provided the difference between their viewpoints is within a certain threshold (the 'bounded confidence'). Generally, the BC models (for more details see Section \ref{sec:1}) for a given population $\Lambda$ with the population state $\mathbf{X}_t=\{X^i_t\}_ {i\in\Lambda}$ at time $t$ can be  described by the following dynamics
\begin{equation}\label{first}
  \frac{\mathrm{d} X^i_{t} }{\mathrm{d}t}
    = \sum_{j\in\Lambda}f_i(\mathbf{X}_t,\varepsilon_i) X_t^j,
\end{equation}
where $X^i_{t}$ is the state of agent $i$ at time $t$ and the function, $f_i$,   describes the interaction intensity of agent $i$ with other individual within the population. In \eqref{first}, the coefficients  $\varepsilon_i>0$ represent the bounded confidence; that is, agent $i$ does not interact with agents who are outside its $\varepsilon_i$- neighborhood.
Here, the micro-level interactions involve individuals adjusting their opinions based on their peers' opinions. The macro-level behavior that emerges from these interactions may lead to global consensus or polarization, depending on factors such as the initial distribution of opinions, and the size of the confidence threshold of the individuals.

Simplified as they are, BC models still provide a powerful framework for understanding how individual-level cognitive and social processes can lead to macro-level patterns. However, known models often face trade off: either scarify accuracy by simplifying and uniforming the agents interactions rule (i.e. using the same simple $f_i$ in \eqref{first} for all agents) to ensure scalability, see \cite{rainer2002opinion},  or they define agent-wise interaction rule (i.e. employing complex $f_i$ for all agents individually) ensuring accuracy  at the expense of scalability, see \cite{HKsocial-sim}. In this paper, we introduce opinion dynamics models that balance between scalability and accuracy by capturing similarities between agents in a scalable manner. Therefore, the proposed models address the challenge of finding the right equilibrium between scalability and accuracy, empowering our modeling ability. In Section \ref{sec:4}, using our models, we reproduce two interesting social phenomena.

Crucial social and cognitive principles that interface with BC models are homophily \cite{mcpherson2001birds} and cognitive dissonance \cite{festinger1962theory}. The former refers to a tendency of individuals to associate and bond with similar ones, and the latter refers to a psychological state in which a person experiences discomfort or tension due to holding two or more contradictory beliefs. Note that the impact of these principles may manifest beyond the individual level, resulting in intergroup interactions at a larger level, particularly when different groups have incompatible beliefs or values \cite{mcpherson2001birds, festinger1962theory, dandekar2013biased}. In the context of opinion dynamics models, it suggests that individuals may not always update their opinions based purely on the opinions of others. Instead, their "group identities" and the opinions prevalent in their social groups may heavily influence their opinion adjustments. Examples are social groups based on various attributes such as ethnicity, nationality, gender, religion, political affiliation, profession, etc. 

Following the discussion above, we define similarities between agents based on their group affiliations. Particularly,  in our setting each group (population) is described by its distribution and the closeness of two groups is defined by Wasserstein distance between them (see Section \ref{sec:2}). Relaying on this, we introduce population biased or multi-population opinion dynamics models (for more details see Sections \ref{sec:3}-\ref{sec:44}). More precisely, suppose that the population $\Lambda$ is divided into $U$ sub-populations and $\Lambda_i\in U$ is the sub-population of agent $i$. Then, the dynamics in \eqref{first} can be written as follows
\begin{equation}\label{second}
  \frac{\mathrm{d} X^i_{t} }{\mathrm{d}t}
    = \sum_{j\in\Lambda}g_i(\Lambda_i,\Lambda_j)f_i(\mathbf{X}_t,\varepsilon_i) X_t^j,
\end{equation}
where the function, $g_i$, is the population-wise interactions intensity function. Throughout the paper, we consider three main classes of population biased models. The first one is called identity invariant model and studied in Section \ref{sec:3}. In this case, group bias is defined only based on the orientation of groups at the initial time (in \eqref{second} the functions $g_i$ depend only on initial distance between sub-populations in $U$). This model is particularly useful in scenarios where static identifiers, such as race, gender, or long-standing cultural values, significantly influence group dynamics. In Section \ref{sec:44}, we preset the second class, which is called time varying model. Here, the group bias is changing over the time. This model is suitable in scenarios where  groups
adapt or reconfigure in response to changing circumstances, such as political shifts or social upheavals. Furthermore, 
the corresponding mean-field limit is also considered (see Section \ref{sec:inte}), which leads to a system of continuity equations.
The third class of models, we consider in this paper is called multi-identity multi-population model. Section \ref{sec:5} is devoted to study of multi-identity model. In contract to previous two cases, in this model agent may have several identities. That is, we assume that we are given, $V$, partitions  of the population $\Lambda$ and $\Lambda_i^{k}\in U_k$ corresponds to sub-population of agent $i$ with respect to $U_k\in V$ partition. This leads to the following generalization of \eqref{second}
\begin{equation*}
  \frac{\mathrm{d} X^i_{t} }{\mathrm{d}t}
    = \sum_{j\in\Lambda}\Bigg(\sum_{k\in V}g^k_i(\Lambda^{k}_i,\Lambda^{k}_j)\Bigg)f_i(\mathbf{X}_t,\varepsilon_i) X_t^j,
\end{equation*}
where now  $g^k_i$, is the population-wise interactions intensity function with respect to partition $U_k$. Here, the  coefficients $g^k_i$ may be defined according to both identity-invariant model or time-varying model. The multi-identity multi-population model enables us to study complex social phenomena where individuals may have several group affiliations. For example, in the US, people's opinions on the 'Abortion law' are mostly influenced by three major  factors: gender, political orientation, and religion.

Note that in all these models, scalability is ensured due to the definition of similarities between populations based on distributions. 
Furthermore, to enhance the accuracy of the models, the intensity functions (i.e., $f_i$, $g_i$, $g^k_i$) describing interactions between agents or populations are determined through kernel functions (see Section \ref{sec:1}). This approach provides a comprehensive method to define individual or population-wise similarity while preserving the main characteristics of the individual or population.

\section{Local Kernel Model}\label{sec:1}
This section provides insights into BC models and their general form. Furthermore, following the idea of interaction potential in \cite{Blow-upBertozzi_2009, hol2}), we introduce the \textbf{\textit{local kernel}}-based BC model and prove the well-posedness of the model, see Proposition \ref{pro-well-pose-ODE}.

Consider a group/population of $N$ agents, each of which is identified by an index $i \in I = \{1, 2,..., N\}$ and has state (opinion) $X^i_t$ at time $t$ taken value from the state space $\Omega= [-1,1]$. The associated distance between states is the $L_1$ metric on $\Rr$; that is,
\begin{equation}\label{l1}
    d(X^i, X^j) = |X^i - X^j|, \quad i,j \in I.
\end{equation}
We say a subset $\Lambda$ of $I$ is a sub-group/population and denote its collective state by $\mathcal{X}^{\Lambda}_t = \{X_t^i: i \in \Lambda\}$ defined on the product space $\Omega^{|\Lambda|} \subset \Omega^N$. If $\Lambda = I$, $\mathcal{X}^{\Lambda}_t$ stands for the entire group state at time $t$. Each group $\Large$ is described by its distribution, $\mu^{\Lambda}_t \in \mathcal{P}(\Omega)$. In other words, the opinion of the agent $i$,  $X_t^i\in \mathcal{X}^{\Lambda}_t$, of the group $ \mathcal{X}^{\Lambda}_t$ is an independent sample of the group distribution, $\mu^{\Lambda}_t$. Later, we show (see Proposition \ref{pro-well-pose-ODE}) that the opinion profile of the agents is defined on whole $\Rr$ with support on $\Omega$, hence, the distribution, $\mu^{\Lambda}_t$, of the group $\Lambda$ is defined on $\Rr$ and compactly supported on $\Omega$. For our analysis, we use the latest setting.

\begin{remark}
In the context of the opinion dynamics model, opinions are quantified on a scale where $-1$ and $1$ represent the two polar extremes of a given viewpoint. Specifically, a state of $-1$ corresponds to one extreme opinion, while $1$ signifies its direct opposite. Values between these extremes indicate intermediate opinions, capturing the spectrum of viewpoints on the issue at hand. The interpretation of these numerical values heavily depends on the specific topic and context being studied.
\end{remark}

We start our discussion by examining the general framework of the opinion dynamics model. In this model, agents continuously update their opinions, a process that is heavily influenced by their \textit{interactions} with other agents. This dynamic can be mathematically represented as:
\begin{equation}\label{dyn}
  \frac{\mathrm{d} X^i_{t} }{\mathrm{d}t}
    = -\alpha_i X^i_t + \alpha_i f(\mathcal{X}^{I}_t),
\end{equation}
where $f$ is the interaction rule among agents, and $\alpha_i \in [0, 1]$ quantifies the degree of \textit{stubbornness} of each agent, as discussed in various studies \cite{abrahamsson2019opinion, bauso2016opinion}. Many classical opinion dynamics models \cite{degroot1974reaching, friedkin1990social, deffuant2000mixing, rainer2002opinion} suggests that the core mechanism driving these interactions is the \textit{similarity difference (or alignment)} between agents. Specifically, an agent's response is governed by its relative difference from the states of other agents. Within this context, the fundamental structure outlined in Equation~\eqref{dyn} can be expanded into a weighted process as follows:
\begin{equation}\label{general_dyn}
    \frac{\mathrm{d} X^i_{t} }{\mathrm{d}t}
    = -\alpha X^i_t + \alpha \sum_{j=1}^N w_t(X_t^i, X_t^j, \varepsilon)X_t^j,
\end{equation}
where $w_t(\cdot)$ is a time-dependent coefficient quantifying the influence exerted by agent $j$ on agent $i$. Despite the linear appearance of the individual update rule in Equation~\eqref{general_dyn}, the function $w_t(\cdot)$ is capable of introducing complex, non-linear dynamics into the opinion updating process.

\subsection{Local Kernel Methods}
In this section, we present a general framework that integrates the widely-accepted synchronous update rule within a bounded confidence interval. Bounded confidence models, as elaborated in seminal works \cite{rainer2002opinion, deffuant2000mixing}, primarily feature \textit{local} interactions. In these models, an agent's opinion is influenced only by a specific subset of peers, often referred to as its \textit{neighbor set}. This concept is rooted in the theories of bounded rationality \cite{kahneman2003maps} and confirmation bias \cite{nickerson1998confirmation}, reflecting the cognitive and psychological complexities inherent in the opinion-formation process \cite{simon1957models}.

Furthermore, the role of the metric, as defined in Equation~\eqref{l1}, is critical in shaping the opinion space and, consequently, the outcomes of the model. While the use of Euclidean distance is a straightforward and intuitive approach, it often falls short in accurately representing the multifaceted nature of opinion dynamics in real-world scenarios. Particularly, the non-linear nature of social interactions and opinion formation, influenced by factors such as long-term and short-term memory, societal norms, and social identity biases, demands a more nuanced metric.
To better address these complexities, we generalized the BC models based on the interacting potential function (see for example \cite{Blow-upBertozzi_2009, hol1, Convex-hul, hol2}) and introduce the \textbf{\textit{local kernel methods}} to characterize such dynamics. In our setting, the kernel function not only depends on the distance between states (see \eqref{l1}) but also on the state of the considered agent. 

Thus, based on the general framework \eqref{general_dyn}, the update rule for individual agent $i$ is defined by
\begin{equation}\label{general_dyn-2}
\begin{cases}
     \frac{\d X^i_{t} }{\dt}
    =-\alpha_i X^i_t +\frac{\alpha_i}{Z }\sum_{j=1}^N \kappa_d(X_t^i, X_t^j,\varepsilon^i)X_t^j\\
    X^i_0 \sim \mu^i_0,
\end{cases}
\end{equation}
where $\kappa_d: \Rr \times \Rr\times\Rr^+ \to [0,1] $  is a kernel function with the Euclidean distance $d$ \eqref{l1}, $\mu^i_0$ is the initial distribution of the population of agent $i$ with $\supp (\mu^i_0)\subseteq \Omega$, $\varepsilon_{i}$ is a agent-wise confidence threshold defining agent $i$ neighborhood, $\alpha_i$ is level of stubbornness of the agent, and $Z$ is a normalization factor given by
\begin{equation*}
 Z=  Z (X_t^i ,\varepsilon_{i})= \sum_{j=1}^N  \kappa_d(X_t^i ,X_t^j,\varepsilon_{i}).
\end{equation*}
In this particular context of bounded confidence, the kernel may have the following form
\begin{equation}\label{weights}
   \kappa_d(X_t^i, X_t^j,\varepsilon_i)= \begin{cases}  \kappa_d(X_t^i, X_t^j), \quad d(X_t^i , X_t^j) \leq \varepsilon_{i}\\
        0, \quad \quad \quad otherwise.
    \end{cases}
\end{equation}
The kernel employs \textit{locally} within the state space $\Omega$. The kernel method facilitates the modeling of intricate individual cognitive behaviors, especially when dealing with large populations where an agent-by-agent description of interactions is infeasible.

In the subsequent sections, we demonstrate that tailoring kernels based on foundational principles from cognitive science and group dynamics can provide a more adaptive and potent framework. Before that, we shall prove the well-posedness of \eqref{general_dyn-2}; that is, there exists a global solution to the system in \eqref{general_dyn-2}, and each component of the solution stays in $\Omega$.

\begin{pro}\label{pro-well-pose-ODE} Let the kernel $\kappa_d(\cdot,\cdot,\varepsilon) \to [0,1] $ is piece-wise continuous in $\Rr^2$ and $\kappa_d(x,x,\varepsilon)\geq \delta_0 >0$ for all $x\in[-1,1]$. Then, the ODEs system in \eqref{general_dyn-2} has a global solution, $\{X_t^i\}_{i=1}^N$. Moreover, the solution $\{X_t^i\}_{i=1}^N$ belongs to $\Omega^N$.
\end{pro}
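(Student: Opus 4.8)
The plan is to establish the result in two stages: first local existence and uniqueness via a standard fixed-point/Picard argument, then a priori bounds guaranteeing that every component stays in $\Omega=[-1,1]$, which in turn yields global existence by the usual continuation principle.

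First I would rewrite the right-hand side of \eqref{general_dyn-2} as a vector field $F:\Rr^N\to\Rr^N$ with $i$-th component
\begin{equation*}
F^i(\mathcal{X}) = -\alpha_i X^i + \frac{\alpha_i}{Z(X^i,\varepsilon_i)}\sum_{j=1}^N \kappa_d(X^i,X^j,\varepsilon_i)X^j .
\end{equation*}
The hypothesis $\kappa_d(x,x,\varepsilon)\geq\delta_0>0$ for $x\in[-1,1]$ is exactly what prevents the normalization factor $Z$ from vanishing: since the $j=i$ term contributes at least $\delta_0$, we have $Z(X^i,\varepsilon_i)\geq\delta_0$ on the relevant region, so $F$ is well-defined and the denominator is bounded away from zero. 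Combined with piecewise continuity of $\kappa_d$ and boundedness of $\kappa_d$ in $[0,1]$, the field $F$ is bounded and (away from the measure-zero jump set of $\kappa_d$) locally Lipschitz, so Carathéodory's existence theorem — or Picard iteration on the continuity pieces, patching solutions at crossing times — gives a local solution on some interval $[0,T_{\max})$.

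Next I would prove the invariance of $\Omega^N$. The key observation is a convexity/averaging structure: the second term of $F^i$ is $\alpha_i$ times a convex combination $\sum_j \lambda_j X^j$ with weights $\lambda_j = \kappa_d(X^i,X^j,\varepsilon_i)/Z \geq 0$, $\sum_j\lambda_j=1$, so whenever all $X^j\in[-1,1]$ this convex combination also lies in $[-1,1]$, and hence $\frac{\d}{\dt}X^i_t \leq -\alpha_i X^i_t + \alpha_i$ and $\frac{\d}{\dt}X^i_t\geq -\alpha_i X^i_t - \alpha_i$. The cleanest way to conclude is a boundary argument: consider $M_t=\max_i X^i_t$; if at some time $M_t$ were to exceed $1$, then at the first such time, for an index $i$ achieving the max one has $X^i_t>1$ while the convex combination is $\leq M_t = X^i_t$, giving $\frac{\d}{\dt}X^i_t \leq \alpha_i(-X^i_t + X^i_t)=0$ — more precisely $\frac{\d}{\dt}X^i_t \leq \alpha_i(1-X^i_t)<0$ once $X^i_t>1$ — which, via a Grönwall / differential-inequality comparison with the scalar ODE $\dot m = \alpha_i(1-m)$, forbids $M_t$ from ever crossing $1$; the lower bound $-1$ is symmetric. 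Since $\supp(\mu^i_0)\subseteq\Omega$ the initial data lie in $\Omega^N$, so the solution stays in the compact set $\Omega^N$ for all $t\in[0,T_{\max})$.

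Finally, boundedness of the trajectory in the compact set $\Omega^N$, together with boundedness of $F$ there, precludes finite-time blow-up, so the standard continuation theorem extends the solution to all $t\geq 0$, proving global existence. The main obstacle, and the step needing care, is the low regularity of $\kappa_d$: because $\kappa_d$ is only piecewise continuous (e.g. the cutoff at $d=\varepsilon_i$ in \eqref{weights}), the vector field $F$ is discontinuous, so one cannot invoke Picard–Lindelöf verbatim. I would handle this either by working with Carathéodory/Filippov solutions, or — if the jump set is mild — by arguing that trajectories cross the discontinuity surfaces transversally (or only finitely often on compact time intervals) and patching classical solutions piece by piece; the invariance argument above is robust to this since it only uses the differential inequality on each smooth piece and continuity of $t\mapsto X_t$.
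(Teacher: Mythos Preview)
Your proposal is correct and follows essentially the same route as the paper: existence via Filippov/Carath\'eodory theory for piecewise-continuous right-hand sides, invariance of $\Omega^N$ from the observation that the drift of the maximal (resp.\ minimal) coordinate is $\leq 0$ (resp.\ $\geq 0$) because it is $\alpha_i$ times a convex combination of the $X^j-X^i$, and global existence by continuation on the compact set. The paper is slightly tidier in two places---it cites Filippov's theorem directly rather than discussing patching across discontinuity surfaces, and it phrases invariance as ``the convex hull $S(t)$ is non-increasing'' by tracking $s_{\min}(t)$ and $s_{\max}(t)$---but your convex-combination/max argument is the same idea; just be careful that your first-crossing argument should be stated as ``$M_t$ is non-increasing'' (which you do obtain from $\dot X^i_t\le 0$ at the maximizing index) rather than ``$M_t>1$ at the first such time,'' since continuity makes the latter formulation awkward.
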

\begin{proof} We write the ODEs system in \eqref{general_dyn-2} in following compact form
\begin{equation}\label{dyn-one-eq}
 \begin{cases}
   \frac{\d\bold{X}_t}{\dt}=\bold{F}(\bold{X}_t)
   \\
   \bold{X}_0\sim \bold{\mu}_0,
 \end{cases}
\end{equation}
where $\bold{\mu_0}=(\mu_0^1,\dots,\mu_0^N)$, $\bold{X}_t=(X_t^1,\dots,X_t^N)$ and $\bold{F}(\bold{X}_t)=(F_1(\bold{X}_t),\dots,F_N(\bold{X}_t))$ with
\begin{equation}
    \label{def-Fi}
  \begin{split}
F_i(\bold{X}_t)&=-\alpha_i X^i_t +\alpha_i\sum_{j=1}^N \frac{\kappa_d(X_t^i, X_t^j,\varepsilon_i)}{Z}X_t^j\\&=\alpha_i \sum_{j=1}^N \frac{\kappa_d(X_t^i, X_t^j,\varepsilon_i)}{Z}(X_t^j-X_t^i).
  \end{split}
\end{equation}
Note that the assumptions on the kernel, $\kappa_d$, imply that the function $\bold{F}$ is a piece-wise continuous function. Therefore,  because the system of ODEs in \eqref{dyn-one-eq} is autonomous by \cite[Theorem 1, page 77 ]{Fillipov} and \cite[Point 4, page 81]{Fillipov} it follows that there exists a solution $\bold{X_t}$ to \eqref{dyn-one-eq}. 

Next, we prove that any solution, $\bold{X_t}$, to \eqref{dyn-one-eq} remains within $\Omega^N$. Let $S(t)$ be the convex hull of $\bold{X}_t=(X_t^1,\dots,X_t^N)$ and $s_{min}(t)=\min\{X_t^1,\dots,X_t^N\}$, $s_{max}(t)=\max\{X_t^1,\dots,X_t^N\}$ are extremal points of $S(t)$. Because $S(0)\subset \Omega$ to prove that the solution, $\bold{X_t}$, to \eqref{dyn-one-eq} belongs to $\Omega^N$, it is enough to prove that the set $S(t)$ is non-increasing. This is equivalent to $s_{min}(\cdot)$ being monotone non-decreasing and $s_{max}(\cdot)$ being monotone non-increasing.   

Here, we only prove that $s_{min}(\cdot)$ is monotone non-decreasing, the proof for  $s_{max}(\cdot)$ being monotone non-increasing is similar, so we omit it.  Let at time $t$ $s_{min}(t)=X_t^k$. According to \eqref{dyn-one-eq} and \eqref{def-Fi}       $X_t^k$  solves \begin{equation}\label{ODE-xk}
     \frac{\d X^k_{t}}{\dt} 
    =\alpha_k \sum_{j=1}^N \frac{\kappa_d(X_t^k, X_t^j,\varepsilon_i)}{Z}(X_t^j-X_t^k).
\end{equation}
Recalling that $X_t^i\in \Rr$, we note that  $y\in S(t)$ if and only if $s_{min}(t)\leq y\leq s_{max}(t)$. Hence, $X_t^k=s_{min}(t)\leq X_t^j\in S(t)$. Using this and the fact that $\alpha_k>0$ and $\frac{\kappa_d(X_t^k, X_t^j,\varepsilon_i)}{Z}\geq 0$ from \eqref{ODE-xk} follows that $ \frac{\d X^k_{t}}{\dt}\geq 0$. Thus, $s_{min}(t)$ is non-decreasing. Consequently, any solution, $\bold{X_t}$, to \eqref{dyn-one-eq} stays in $\Omega^N$. 
This  with
%\begin{equation*}
%||F(\bold{X})||\leq C(1+||\bold{X}||),
%\end{equation*}
 the standard extending solution arguments (see, for example, \cite[Theorem 3.3]{khalil2013nonlinear}) implies the global existence of solutions.
\end{proof}

\begin{remark}
%\label{rem-unique}
If to the assumptions of Proposition \ref{pro-well-pose-ODE}, we also add locally Lipschitzness of the kernel, $\kappa_d(\cdot,\cdot,\varepsilon)$, then, the solution to \eqref{dyn-one-eq} is also unique.    
\end{remark}

\subsection{Examples of Local Kernel Functions}
To illustrate the versatility of the general framework described by the equation~\eqref{general_dyn-2}, we present several concrete examples of kernel functions. These examples showcase how different kernels can be applied to model varying interaction dynamics:

\begin{itemize}
    \item \textbf{\textit{Uniform kernel}}: This kernel assigns equal weight to all agents within the confidence interval, disregarding the exact distance between opinions.
    \begin{equation}\label{uniform}
       \kappa_d(X_t^i, X_t^j, \varepsilon) = \begin{cases} 
           1, & \text{if } d(X_t^i , X_t^j) \leq \varepsilon,\\
           0, & \text{otherwise}.
       \end{cases}
    \end{equation}

    \item \textbf{\textit{Triangular kernel}}: The influence decreases linearly with the increase in distance between opinions, reaching zero at the edge of the confidence interval.
    \begin{equation*}
       \kappa_d(X_t^i, X_t^j, \varepsilon) = \begin{cases} 
           \varepsilon - d(X_t^i , X_t^j), & \text{if } d(X_t^i , X_t^j) \leq \varepsilon,\\
           0, & \text{otherwise}.
       \end{cases}
    \end{equation*}

    \item \textbf{\textit{Exponential kernel}}: This kernel introduces a non-linear decay of influence with distance, controlled by the parameters $\gamma$ and $\alpha$:
    \begin{equation}\label{expo}
       \kappa_d(X_t^i, X_t^j, \varepsilon) = \begin{cases} 
           \exp \left(- \gamma d(X_t^i , X_t^j)^\alpha \right), & \text{if } d(X_t^i , X_t^j) \leq \varepsilon,\\
           0, & \text{otherwise},
       \end{cases}
    \end{equation}
    where $\gamma, \alpha > 0$ are parameters that modulate the decay rate and smoothness of the influence. Notably, when $\alpha = 2$, it's a squared exponential kernel (a type of \textit{radial basis function kernel}); when $\alpha = 1$, it takes the form of an absolute exponential kernel (\textit{Ornstein–Uhlenbeck kernel}).
\end{itemize}
These kernel functions are integral in modeling the nuanced interactions within the opinion dynamics, allowing for a flexible representation of how agents influence each other based on the proximity of their opinions.
\begin{figure}[h]
\begin{subfigure}{0.8\textwidth}
    \centering
\includegraphics[width=0.8\textwidth]{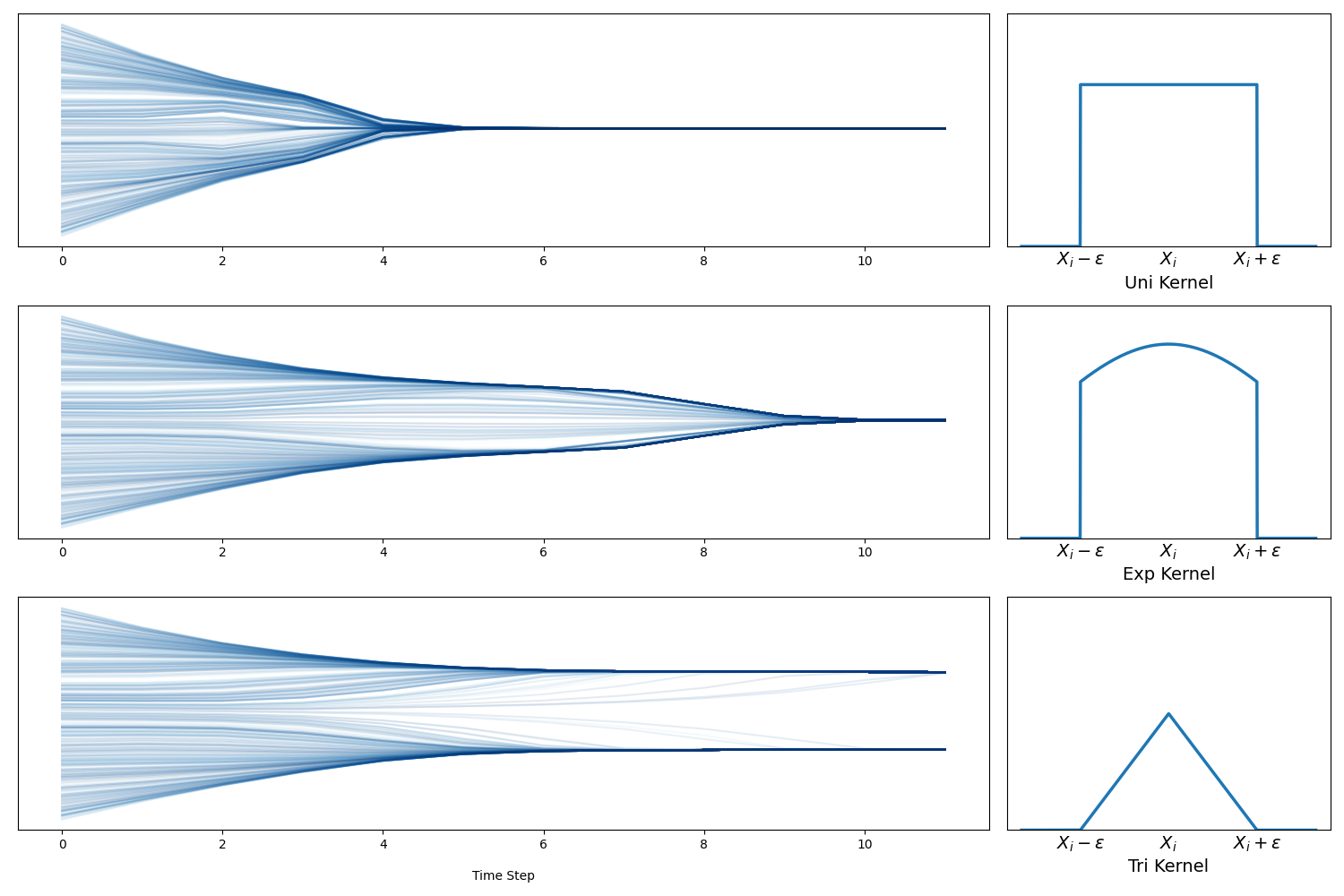}
    \caption{Basic Kernel Functions.}
\label{fig:basic_kernels}
\end{subfigure}
\begin{subfigure}{0.8\textwidth}
    \centering
\includegraphics[width=0.8\textwidth]{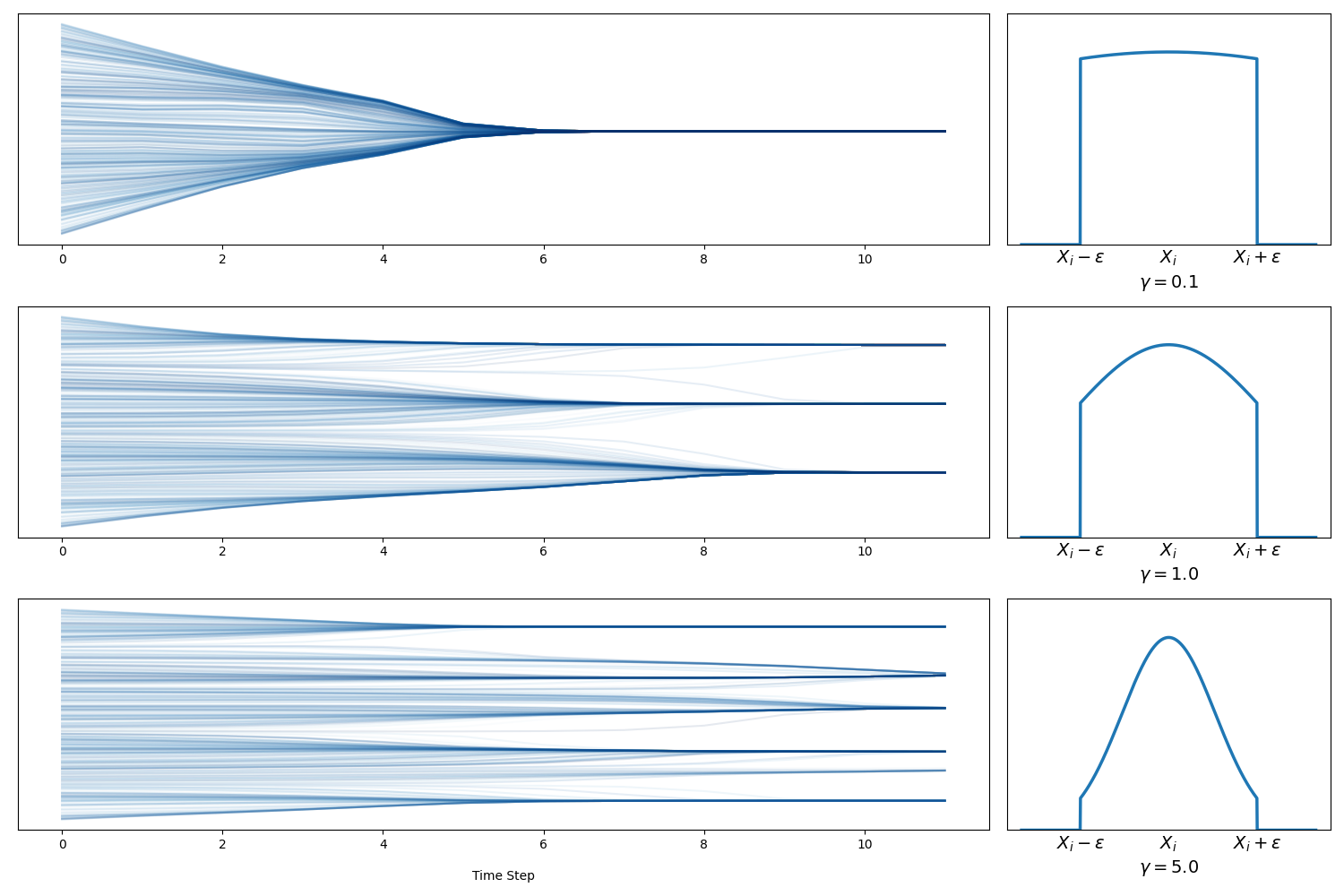}
\caption{Exponential Kernels with Varying $\gamma$.}
\label{fig:exp_kernels}
\end{subfigure}
\caption{Illustration of different local kernel functions: for all simulations, the initial opinion profiles are sampled from a uniform distribution, and model-related parameters, except for the kernels, are kept constant (i.e., bounded confidence $\varepsilon=0.6$, level of stubbornness $\alpha=0.1$, number of agents $N=500$).}
\end{figure}
Figures~\ref{fig:basic_kernels} and \ref{fig:exp_kernels} show the trajectories of opinion profiles over time under the influence of different local kernels. The results illustrate how the \textit{shape} of the kernels affects the collective dynamics of opinions and the resulting patterns in the steady state. The \textit{uniform-like} shape of the kernels (as seen in the first plots of Figures~\ref{fig:basic_kernels} and \ref{fig:exp_kernels}) indicates a higher openness of agents to their neighbors, while a \textit{normal-like} shape suggests greater stubbornness (as seen in the third plots of Figures~\ref{fig:basic_kernels} and \ref{fig:exp_kernels}). A deviation from uniform-like to normal-like shape encourages heterogeneous opinion configurations (opinion fragmentation), because the intensity and range of influence among agents become more localized and selective. Furthermore, the nuanced differences between kernels of similar shapes affect the dynamics, particularly in terms of reaching a steady state. For instance, slight variations in the steepness or spread of a normal-like kernel could either hasten the formation of consensus or deepen the divides, influencing how quickly and to what extent a steady state of opinion distribution is reached

Observe that several kernel functions, notably the exponential kernels, inherently possess a locality characteristic: the intensity of interactions diminishes with increasing distance across the state space. In our model, this local behavior is further emphasized by the parameter $\varepsilon$. This approach not only distinguishes our dynamics from models where such locality is an intrinsic property of the kernel functions but also introduces a \textit{hard cut-off} in interactions. Such a truncation sets the stage for the group-level interaction dynamics, which we shall delve into later in this paper.

\begin{remark}
    If $\kappa_d(X_t^i, X_t^j, \varepsilon^i)$ is a uniform kernel (see equation in ~\eqref{uniform}), the dynamics described by ~\eqref{general_dyn-2} resembles the classical Hegselmann-Krause opinion dynamics model \cite{rainer2002opinion}. The derivation of this relationship is straightforward and is omitted here for brevity.
\end{remark}

\subsection{General Kernel Functions}
Until now, we've restricted our focus on \textit{isotropic} kernel functions, i.e., kernels that depend solely on the distance between agents, i.e. $\kappa_d(X_t^i, X_t^j, \varepsilon) = \kappa(d(X_t^i, X_t^j), \varepsilon)$. These kernels offer several appealing properties: they are inherently simple and intuitive within the context of opinion dynamics, and they maintain the essential character of local interaction, particularly beneficial when dealing with a large number of agents. In many established bounded confidence models of opinion dynamics and their variants, this \textit{isotropy} is a fundamental assumption, primarily to model \textit{Homophily} – the tendency of individuals to connect due to similar characteristics – as observed in social networks \cite{mcpherson2001birds}.

In our formulation (see ~\eqref{weights}), we broaden the scope beyond this isotropic constraint, allowing $\kappa_d(\cdot)$ to be influenced not only by the distance $d(X_t^i, X_t^j)$ but also by the specific states of $X_t^i$ and $X_t^j$, and the parameter $\varepsilon$. This adjustment retains the model's scalability while enhancing its capacity to replicate more complex social dynamics. Next, we will illustrate with a concrete example how this generalization of kernel functions enriches our model, particularly in dynamically capturing nuances of interpersonal attraction.

\begin{example}
\textbf{Resistance in Extreme Opinion Regions.} Research indicates that individuals with extreme opinions tend to be more resistant to change and less open to alternative viewpoints, a phenomenon attributed to \textit{Confirmation Bias} \cite{nickerson1998confirmation} and \textit{Selective Exposure} \cite{stroud2008media}. Conversely, individuals with neutral opinions or those newly exposed to a subject are generally more receptive to persuasion and often experience ambivalence when confronted with conflicting arguments \cite{petty1986elaboration,priester1996gradual}. To model these divergent tendencies in opinion dynamics, we employ the following kernel function:
\begin{equation}\label{combi_kernel_1}
    \kappa_d(X_t^i, X_t^j, \varepsilon) = \begin{cases} 
        \exp \Big(- \gamma(|X_t^i|)d(X_t^i, X_t^j)^\alpha \Big), & \text{if } d(X_t^i, X_t^j) \leq \varepsilon,\\
        0, & \text{otherwise},
    \end{cases}
\end{equation}
where $\gamma(\cdot)$ is a monotonically increasing function of $|X_t^i|$, with $\gamma(0)=0$. Figure~\ref{fig:comb_kernel_1} demonstrates the evolution of $\kappa_d(\cdot)$ with respect to the state over $[-1,1]$, for $\gamma(x)=x$.
\begin{figure}[h]
    \centering
\includegraphics[width=0.8\textwidth]{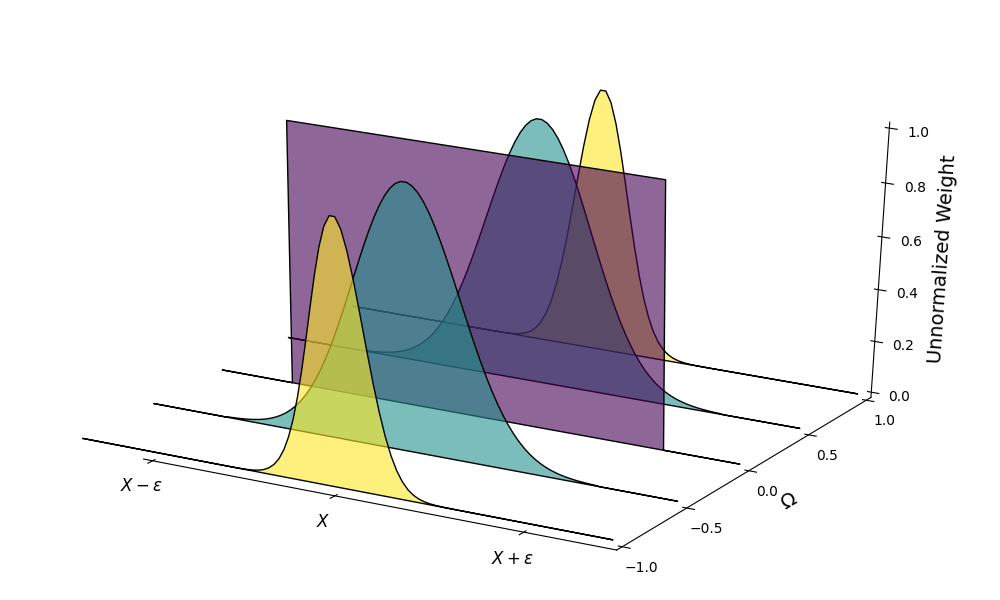}
    \caption{State-Dependent Kernel Function.}
\label{fig:comb_kernel_1}
\end{figure}
Under this kernel, agents with neutral opinions exhibit more openness to their neighbors, akin to uniform weighting within the bounded confidence interval. In contrast, agents with extreme opinions (near $-1$ or $1$) display more stubborn behavior. Figure~\ref{fig:resistance} illustrates the opinion density evolution under such state-dependent kernel functions. Notice the formation of high-density opinion clusters in extreme regions. As agents enter these extreme states due to interpersonal attraction, they tend to remain there due to increased resistance. This tendency facilitates concentration: clusters with more agents exert a strong collective attraction force on others outside the region. Such feedback effects provide a potential explanation for the formation of \textit{Echo Chambers} and public opinion polarization.
\begin{figure}[h]
    \centering
    \includegraphics[width=0.8\textwidth]{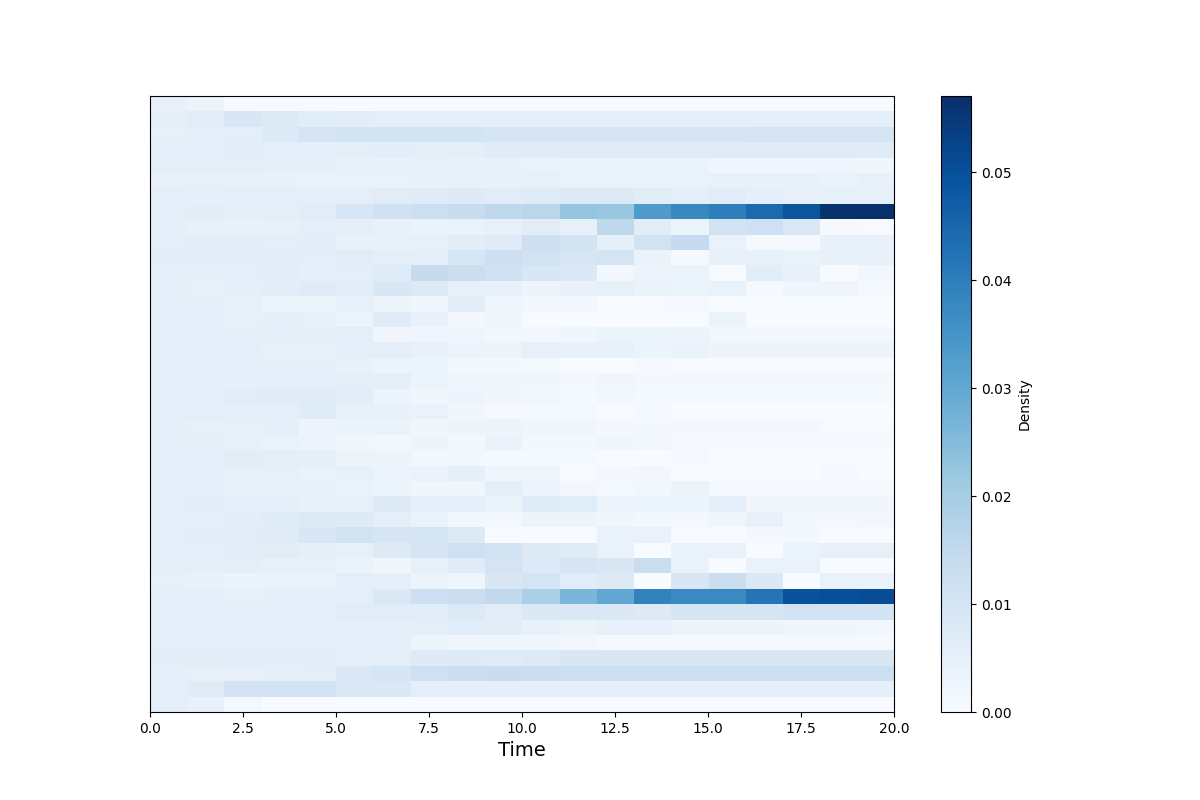}
    \caption{Opinion Density Evolution under State-Dependent Kernel, equation in ~ \eqref{combi_kernel_1}: $\mu_0 = Unif[\Omega]$, $N = 300$, $\alpha = 0.1$, $\varepsilon = 0.6$, $\textit{time steps} = 20$, and $\textit{total number of trials} = 50$. }
    \label{fig:resistance}
\end{figure}
\end{example}

\subsection{Extending to Multi-Population}\label{subsec:multi_p}
The local kernel methods naturally extend to a multi-population (multi-group) setting. Consider a population that can be divided into sub-populations based on their social categories and identities, such as age, gender, race, education level, shared goals, and objectives. In the opinion dynamics setting, the shared characteristics of each sub-population may exhibit a collective interaction mode that differs across various populations. This variation in interaction modes can be captured using different local kernel methods. These local kernels are tailored to capture the specificities of interactions, including the propensity to influence or be influenced, the intensity of interactions, and the nature of information exchange within each group.

Mathematically, the agents set $I$ of size $N$ is partitioned into $p$ \textbf{disjoint} subsets so that each represents a sub-populations; that is,  $P=\{1, 2, ..., p \}$, $I = \cup_{k\in P}\Lambda^k=\cup_{k=1}^{p}\Lambda^k$, $|\Lambda^k|=n_k$  and $\sum_{k=1}^p n_k=N$. Following the discussion above, for agent $i$ at time $t$, we denote its population and corresponding opinion profile by $\Lambda(i)$ and $\mathcal{X}_t^{\Lambda(i)}$. {Using these notations, the general update rule for  agent $i$ is defined by
\begin{equation}\label{general_dyn-3}
\begin{cases}
     \frac{\d X^i_{t}}{\dt} 
    =-\alpha_i X^i_t +\alpha_i\sum_{j=1}^N \frac{\kappa_d^{\Lambda(i)}(X_t^i, X_t^j,\varepsilon_i)}{Z}X_t^j\\
    X^i_0 \sim \mu^i_0,
\end{cases}
\end{equation}
where $\kappa_d^{\Lambda(i)}(\cdot)$ is a population-specific kernel functions. 

In this extended framework, we have preliminarily explored the multi-population setting where the population-dependent bias is \textbf{static}. The group identities do not change over time, resulting in a static interaction mode for each population. Next, we will formally introduce multi-population opinion dynamics models, where the group bias can be dynamic as well and defined concurrently with the opinion formation. This approach helps us study various opinion formation phenomena, such as the formation of echo chambers, the emergence of new groups, etc. Before that, we first establish a proper and uniform metric at the group level in the subsequent section.

\section{Population Similarity Measure}\label{sec:2}
In this section, we define population similarities 
and present the motivation behind them.  
 Furthermore, we illustrate certain properties of Wasserstein distance to motivate our study based on the population-wise similarities given by the Wasserstein metric.

We begin by noting that the distribution of opinions within groups of differing social identities can often reflect the defining characteristics of those identities. For instance, in a survey experiment conducted by \cite{balietti2021reducing}, participants with varying political beliefs expressed their views on the focal issue of wealth redistribution.
The data revealed stark differences in the opinion spectra between self-identified Democrats and Republicans. Specifically, the opinions among Republican participants followed a distribution resembling a normal distribution, and their views, on average, were against wealth redistribution, but did exhibit heterogeneity. In contrast, the opinions among Democrats followed a Gamma-like distribution, with an overwhelming majority supporting wealth redistribution.
This illustrative example underscores how group identities are encoded in the distribution of opinions. It also highlights why using distribution functions provides a more comprehensive and generalizable description compared to relying solely on descriptive statistics such as mean, variance, mode, etc. 

Thus, motivated by the previous example and following the well established theories like particle physics and mean-field games, we quantify the identity of a given set of agents on the state space $\Omega$ by its distribution. More precisely, for a given sub-population  $\Lambda$ with  corresponding states of agents $\mathcal{X}^{\Lambda}_t = \{X_t^i: i \in \Lambda\}$, we describe the collective state or the sub-population $\Lambda$ by  distribution function $\mu^{\Lambda}\in\mathcal{P}(\Rr)$ with  $\supp(\mu^{\Lambda})\subseteq\Omega$. So, the opinion of the agent $i$,  $X_t^i\in \mathcal{X}^{\Lambda}_t$, of the group $ \mathcal{X}^{\Lambda}_t$ is an independent sample of the group distribution, $\mu^{\Lambda}_t$.
Later,  we consider two different  approaches for the determination of  population distribution (see Sections \ref{sec:iso} and \ref{sec:inte}).

 The preceding probabilistic modelling also allows for a mathematical quantification of opinion shifts over time or among different populations. We measure the discrepancies in opinion distributions across various groups by the \textit{statistical distance}. Consider two distinct sets of agents $\Lambda^k$, $\Lambda^q$, their opinion discrepancy \texttt{dist}$(\mathcal{X}^{\Lambda^k}_t, \mathcal{X}^{\Lambda^q}_t)$ is
 \begin{equation*}
    \texttt{dist}(\mathcal{X}^{\Lambda^k}_t, \mathcal{X}^{\Lambda^q}_t) = d(\mu^{\Lambda^k}_t, \mu^{\Lambda^q}_t),
 \end{equation*}
 where $d(\mu^{\Lambda^k}_t, \mu^{\Lambda^q}_t)$ is the statistical distance between probability distributions $\mu^{\Lambda^k}_t, \mu^{\Lambda^q}_t$ at time $t$. 
The notion of statistical distance is used here in a broad sense. Depending on the context, different measures of distance or divergence can be employed, such as Total Variational Distance, $p$-Wasserstein Distance, or $f$-Divergences (a comparison of these distances is demanding  in Figure \ref{dist}). However, for the purposes of our models and analysis, we specifically employ the 1-Wasserstein Distance. We chose this metric because it offers desirable properties from both an interpretive and computational standpoint. \\
\newline
\textbf{Wasserstein Distance as Population-Similarity Measure.}\space Consider two distinct sets of agents $\Lambda^k$, $\Lambda^q$, their distance $d(\mu^{\Lambda^k}_t, \mu^{\Lambda^q}_t)$ is:
\begin{equation}\label{statistical distance}
d(\mu^{\Lambda^k}_t, \mu^{\Lambda^q}_t) = W_p(\mu^{\Lambda^k}_t, \mu^{\Lambda^q}_t),
\end{equation}
where $W_p(\cdot)$ is the Wasserstein distance of order $p$ (also commonly called the \textit{Kantorovich-Rubinstein distance}, when $p=1$). Recalling that we are in one-dimension, the distance has an analytic form 
\begin{equation*}
W_p(\mu^{\Lambda^k}_t, \mu^{\Lambda^q}_t) = \Big (\int_{\Omega} \big|F_{\Lambda^k}(x) - F_{\Lambda^q}(x)\big|^p dx \Big)^{1/p},
\end{equation*}
where $F_{\Lambda^k}$, $F_{\Lambda^q}$ are the cumulative distribution functions (CDF) of  $\mu^{\Lambda^k}_t, \mu^{\Lambda^q}_t$ respectively, for $p=1$ and are quantiles, when $p>1$. 
\begin{figure}[!tbp]
  \centering
  \begin{minipage}[b]{0.45\textwidth}
\includegraphics[width=\textwidth]{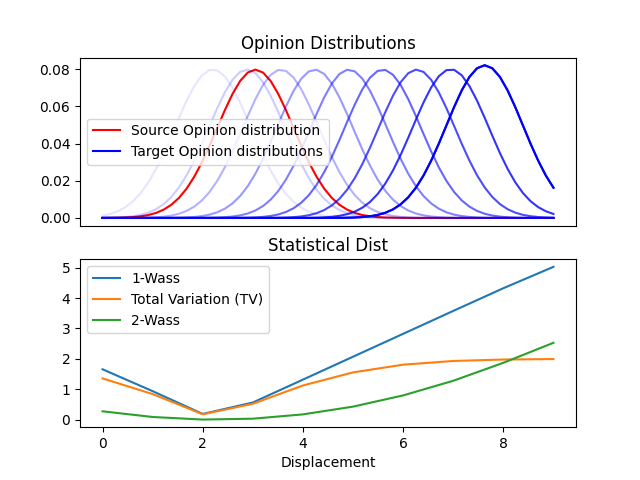}
    % \caption{Displacement}
  \end{minipage}
  \begin{minipage}[b]{0.45\textwidth}
\includegraphics[width=\textwidth]{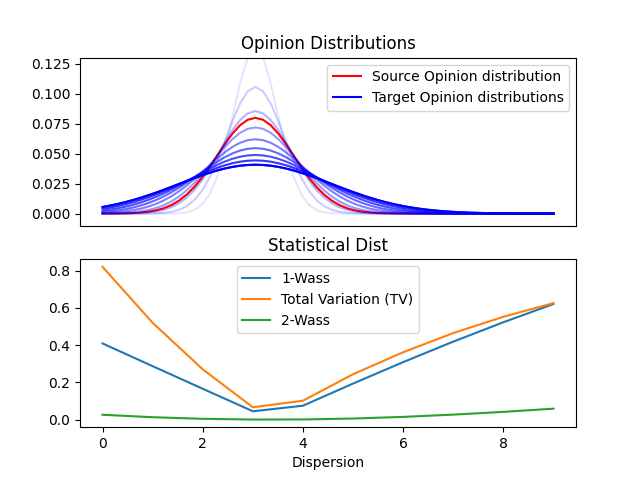}
    % \caption{Dispersion}
  \end{minipage}
  \caption{Statistical Distance under Displacement and Dispersion Variation.}
  \label{dist}
\end{figure}
1-Wasserstein distance is well adapted to the opinion dynamics setting with the model we considered \eqref{general_dyn-2} because:
\begin{itemize}
    \item \textit{\textbf{Metric Structure:}} Unlike pseudo-metrics such as divergences, the Wasserstein distance is a true metric. It satisfies important properties like non-negativity, identity of indiscernibles, symmetry, and the triangle inequality. These properties are crucial for quantifying differences in a focal issue between populations in a discernible way. For instance, the identity of indiscernibles ensures that identical opinion spectra have a zero distance between them, providing a baseline reference point for comparisons.
    
    \item \textit{\textbf{Norm Feature:}} The Wasserstein distance of order 1 has a specific \textit{norm} feature (for more details see \cite{villani2009optimal}) that makes it compatible with the absolute distance $d(\cdot)$ in the original state space $\Omega$, i.e. $\|\delta_x - \delta_y\| = d(x,y)$ for all $x,y \in \Omega$, where $\delta_x$ is Dirac delta function. In context, this compatibility allows us to naturally extend distances between individuals to distances between entire groups. This generalization from the individual level to the group level effectively provides an explicit isometric embedding of the original state space $\Omega$ in a Banach space (complete normed vector space), where the dynamics between groups take place. This paves the way for us to define interaction rules on the mean behavior of large group later in the paper.
    
    \item \textit{\textbf{Robustness:}} The Wasserstein distance is stable under mild perturbations of the probability distributions. This is particularly useful when dealing with empirical distributions, which might be subject to noise or sampling errors. In our models where agents have beliefs represented by probability distributions, stability under mild perturbations ensures that small changes in individual beliefs do not lead to disproportionate changes in the overall belief landscape (See Figure \ref{dist}).
\end{itemize}
The Wasserstein distance provides a comprehensive metric for dynamically quantifying the differences in opinion profiles across diverse populations. We have established its suitability for analyzing complex opinion dynamics within varied social identities. In the next sections, we will delve into the formulation of population-level interaction functions grounded in the Wasserstein distance. Our focus will particularly be on two distinct scenarios: \textbf{static group identities}, which reflect consistent societal structures, and \textbf{dynamic group identities}, which capture the evolving nature of social affiliations and perspectives.

\section{Identity-Invariant Multi-Population Model}\label{sec:3}

In this section, we introduce the identity-invariant multi-population model, an advanced extension of the opinion dynamics framework initially presented in Equation~\ref{general_dyn-2} and discussed in Section 2 (see Subsection \ref{subsec:multi_p}). Our model incorporates the dynamics of group identity into the process of opinion formation. It specifically focuses on the probabilistic structural dynamics that are inherent in this phenomenon, providing a more nuanced understanding of how opinions evolve within groups.

The identity-invariant multi-population model is designed to explore the effects of static identifiers such as race, gender, or long-standing cultural values on group opinion dynamics. By integrating these aspects of group identity, the model offers a unique perspective on the interplay between individual opinions and collective identity.

At the core of our model is the application of the Wasserstein distance (Equation~\ref{statistical distance}) as a uniform metric within the collective opinion space. Concretely, we introduce a population-wise local kernel method in the Wasserstein space. This kernel function, \( K \), functions as an interaction functional between sub-populations, defined by their respective probability distribution functions. Formally, the kernel function is defined as \( K: \mathcal{P}_1(\mathbb{R}) \times \mathcal{P}_1(\mathbb{R}) \times \mathbb{R}^+ \to \mathbb{R}^+ \), where \( \mathcal{P}_1(\mathbb{R}) \) represents the space of probability distributions with finite first moments. The function \( K(\mu, \nu, \sigma) = K(W_p(\mu, \nu), \sigma) \) for \( \mu, \nu \in \mathcal{P}_1(\mathbb{R}) \), and \( \sigma \in \mathbb{R}^+ \).

Now, based on the discussions above, we present our first multi-population model, where the population bias has a simple form and it has a straightforward interpretation. We extend the model  in \eqref{general_dyn-3} to the multi-population level:
\begin{equation} \label{multi-p-invar}
    \frac{\d X^i_{t}}{\dt}
    =-\alpha_i X^i_t + \frac{\alpha_i}{Z_\beta} \sum_{\Lambda(j)\in I} \beta_i(t)K(\mu^{\Lambda(i)}_0,\mu^{\Lambda(j)}_0,\sigma_i)\sum_{j \in \Lambda(j)} \kappa_d(X^i_{t},X^j_{t}, \varepsilon_i) X_t^j,
\end{equation}
where the group interaction function $K(\cdot)$ only depends on the initial distributions, $\beta_i$ is decaying factor and $Z_\beta$ is  normalization constant given by 
\begin{equation*}
 Z_\beta=  Z_\beta(\mathbf{\mu}^{\Lambda}_0, \mathbf{X}_t ,\sigma_{i},\varepsilon_{i})= \sum_{j=1}^N  K(\mu^{\Lambda(i)}_0,\mu^{\Lambda(j)}_0,\sigma_i)\kappa_d(X_t^i ,X_t^j,\varepsilon_{i}).
\end{equation*}
In \eqref{multi-p-invar}, the function $K(\cdot)$  defines the interaction between populations (an analog of kernel functions between individuals), and the decaying factor $\beta_i$ describes its intensity change over time. 
The well-definiteness of the ODEs system in \eqref{multi-p-invar} follows from a more general result proved in the next section (see Proposition \ref{pro-well-pose-multi-iso-ODE}).

Contrasting with individual-level dynamics common in single-population models, our framework emphasizes group-level interactions. This distinction is critical as it allows for a more complex interaction function \( K(\cdot) \). The function is engineered to encapsulate a broad spectrum of intergroup relations, including social, demographic, and ideological aspects, thus offering a richer model that mirrors the complexities of real-world dynamics. Next, we give a few concrete examples of group interactions based on the dynamics given in \eqref{multi-p-invar}.
\begin{example}\label{in-group-fav}\textbf{In-Group Favoritism.} In-group favoritism refers to the tendency for individuals to favor or give preferential treatment to members of their own group over those of an out-group\cite{hewstone2002intergroup, everett2015preferences}. This can be captured by the following kernel function
\begin{equation*}
K(\mu^{\Lambda(i)}_0,\mu^{\Lambda(j)}_0,\sigma_i)= 
\begin{cases} \exp (-\Gamma W_p(\mu^{\Lambda(i)}_0,\mu^{\Lambda(j)}_0))
, \quad &W_p(\mu_{\mathcal{X}^i_t},\mu_{\mathcal{X}^j_t}) \leq \sigma_{i}\\
        0,   \quad & \text{otherwise,}
    \end{cases}
\end{equation*}
with the scaling parameter $\Gamma > 0$ that controls the degree of group bias (i.e., a small $\Gamma$ represents a strong preferential bias between two populations, and vice versa). The simulation results in Figure \ref{in group bias} illustrate the impact of in-group bias (plots with decreasing $\Gamma$) on the dynamics of opinion formation over time. Three distinct conditions were simulated: no group bias, moderate group bias, and strong group bias. In the absence of group bias, opinions from different groups displayed considerable convergence, suggesting a scenario where inter-group dialogue might lead to a consensus. As the group bias increased to a moderate level, opinions within each group showed signs of internal alignment, with a clear but diminishing inter-group influence. In the case of strong group bias, the groups' opinions diverged sharply, converging strongly within their respective groups and indicating the formation of echo chambers or polarization.

\end{example}
\begin{figure}[h]
    \centering
\includegraphics[width=0.8\textwidth]{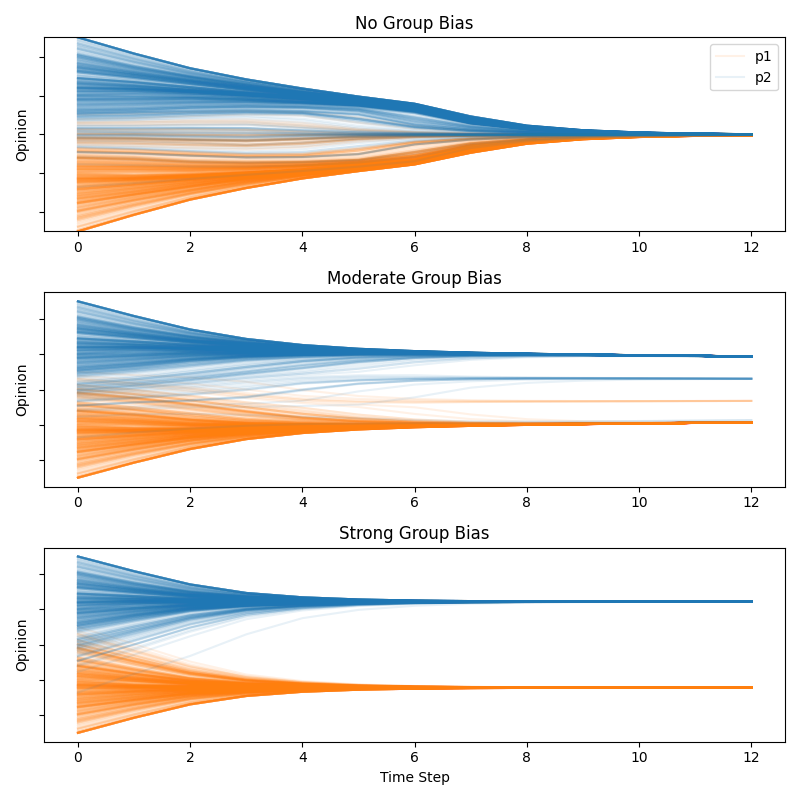}
    \caption{In-Group Bias: initial opinion profiles $\mu_0^{p1}$ and $\mu_0^{p2}$ are sampled from truncated Gaussian distribution, $N=400,\,\ \varepsilon=0.6, \,\ \alpha=0.5,\,\ \kappa_d=uniform\ kernel 
  $, \eqref{uniform}.}
    \label{in group bias}
\end{figure}
\begin{example}\textbf{Asymmetric Group Bias.} The phenomenon of asymmetric group bias is particularly relevant in settings where one group may have a preferential view or favorable bias towards another group, which is not equally reciprocated. To capture this phenomenon in opinion dynamics context, we consider the following kernel function based on the previous example (\ref{in-group-fav}) (for simplicity, a two-population case is studied):
\begin{equation*}
K(\mu^{\Lambda(i)}_0,\mu^{\Lambda(j)}_0,\sigma_i)= 
\begin{cases} \exp (-\Gamma W_p(\mu^{\Lambda(i)}_0,\mu^{\Lambda(j)}_0))
, \quad &W_p(\mu_{\mathcal{X}^i_t},\mu_{\mathcal{X}^j_t}) \leq \sigma_{i}\ \text{and}\ i \in p_1\\
        0,   \quad & \text{otherwise,}
    \end{cases}
\end{equation*}
with the scaling parameter $\Gamma > 0$ that controls the degree of group bias and the condition,  $i\in p_1$, exerts this group bias asymmetrically on the first population. In other words, this bias occurs when one group $p_1$ has a preferential view or favorable bias that is not reciprocated by another group, $p_2$, leading to an imbalanced influence on collective opinion formation. As shown in Figure \ref{asym group bias}, with no bias, opinions are only exchanged within groups, suggesting polarization. With moderate bias, group $p_1$ starts to exert more influence and group $p_2$'s opinions begin to disperse, suggesting a shift towards the dominant group's stance. Under strong bias, group $p_1$'s opinions dominate the narrative, pulling group $p_2$'s opinions towards them. This leads to a significant alignment of group $p_2$'s opinions with that of group $p_1$.
\begin{figure}[h]
    \centering
\includegraphics[width=0.8\textwidth]{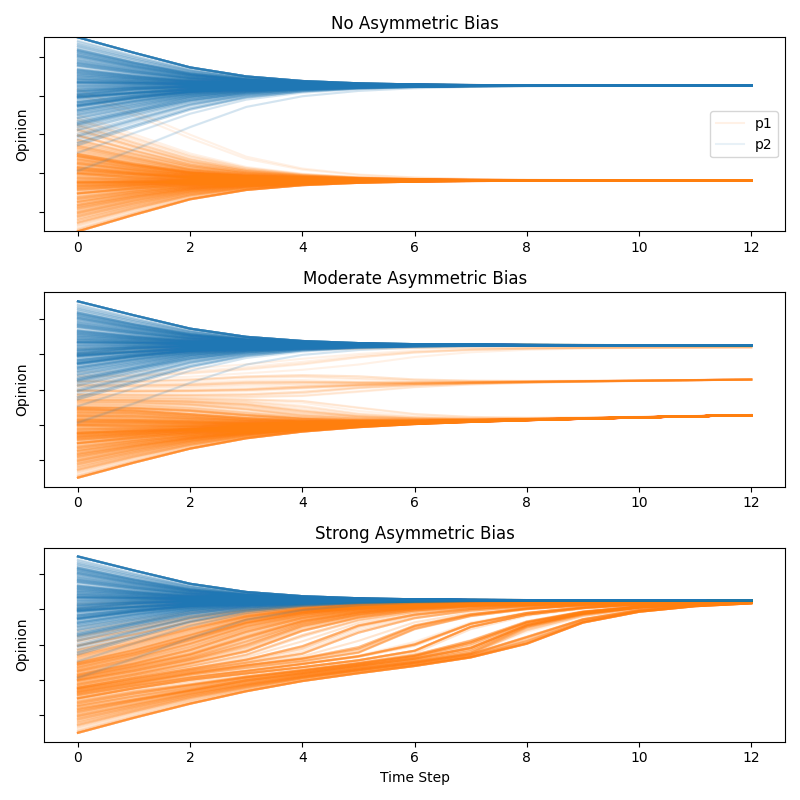}
\caption{Asymmetric Group Bias: $\mu_0^{p1}$ and $\mu_0^{p2}$ are sampled from truncated Gaussian distribution, $N=400, \,\ \varepsilon=0.6,\,\ 
 \alpha=0.5, 
\,\  \kappa_d=uniform \ kernel$, \eqref{uniform}.}
    \label{asym group bias}
\end{figure}
\end{example}
In this section, we have examined the static nature of group opinion dynamics through the lens of the identity-invariant multi-population model. Our exploration encompassed scenarios with both in-group favoritism and asymmetric group bias, offering insights into how static identifiers and inherent biases shape the landscape of opinion formation within and across various groups. The application of the Wasserstein distance and the introduction of a population-wise local kernel method have enabled us to quantify and visualize the nuances of these interactions in a structured and scalable manner.

In the upcoming section, we aim to explore how groups adapt or reconfigure in response to changing circumstances, such as political shifts or social upheavals. This shift in focus will allow us to extend our current model to dynamically evolving environments, where group identities and opinions are not fixed but are fluid and responsive to external stimuli.

\section{Time-Variant Multi-Population Model}\label{sec:44}
In this section, we introduce time-varying multi-population models.
In these models, each agent from every sub-population updates its opinion according to a given rule (dynamics).  Because of this, the collective state of each sub-population is updated, and consequently, their distributions are updated as well. Therefore, the interaction between sub-populations changes. Based on this, we define a time-varying multi-population opinion dynamics model, where 'time-varying' refers to the population bias defined by the changing over time distributions. We considered two types of time-variant models. The first one is called the isolated one. In this case, the population distribution is defined by the considered agents, empirical distribution. Subsection \ref{sec:iso} is devoted to the isolated model, where we prove its well-posedness, see Proposition \ref{pro-well-pose-multi-iso-ODE}. The second class of model is called the integrated multi-population model, which is introduced in Subsection \ref{sec:inte}. Here, agents have access to the actual distribution of their population, which is governed by the system of continuity equations. Theorem \ref{main}

Now, following the discussions above, we introduce time-varying multi-population model
\begin{equation}\label{def-multi-dyn}
\begin{cases}
     \frac{\d X^i_{t}}{\dt}
    =-\alpha_i X^i_t +\frac{\alpha_i}{Z_p} \sum_{j=1}^N K(\mu^{\Lambda(i)}_t,\mu^{\Lambda(j)}_t,\sigma_i)\kappa_d(X^i_{t},X^j_{t},\varepsilon_i)X^j_{t}\\
    X^{i}_0 \sim \mu^i_0,
\end{cases}
\end{equation}
where $\mu^{\Lambda}_t$ is the distribution of the sub-population $\Lambda$ at time $t$, $K(\cdot)$ is an interaction function that defines the update rule between populations (an analog of kernel functions between individuals) and $Z_p$ is a normalization factor given by
\begin{equation*}
 Z_p=  Z_p (\mu^{\Lambda(i)}_t, X_t^i ,\sigma_{i},\varepsilon_{i})= \sum_{j=1}^N  K(\mu^{\Lambda(i)}_t,\mu^{\Lambda(j)}_t,\sigma_i)\kappa_d(X_t^i ,X_t^j,\varepsilon_{i}).
\end{equation*}

In this formulation, the opinion of an individual is seen to be influenced by a combination of factors related to both the specific group $\Lambda(i)$ to which they belong to and the broader population.

Observe that the interactions between groups and individuals are decomposed in \eqref{def-multi-dyn}. So, we can rewrite the dynamics in the following form:
\begin{equation} \label{multi-p-decomp}
    \frac{\d X^i_{t}}{\dt}
    =-\alpha X^i_t + \frac{\alpha}{Z_p} \underbrace{\sum_{k=1}^{p} K(\mu^{\Lambda(i)}_t,\mu^{\Lambda^k}_t,\sigma_i)}_{\text{intergroup interaction}} \underbrace{\sum_{j \in \Lambda^k} \kappa_d(X^i_{t},X^j_{t}, \varepsilon_i) X_t^j}_{\text{in-group interaction}}.
\end{equation}
In Sections \ref{sec:iso} and \ref{sec:inte}, we present two different  approaches for the determination of the sub-populations time-varying distributions   in \eqref{def-multi-dyn} or \eqref{multi-p-decomp}. Before going to that specifications, next, we present three different classes of population-wise kernels, which provides some insight into the modeling capabilities the considered model.\\
\newline
\textbf{Examples of Population-wise Kernel Functions.}
Here, we provide examples of population-wise kernel functions with different use of threshold $\sigma$. 
\begin{itemize}
    \item \textit{No  threshold}: In this type of kernels the representatives of different populations always  interact even though the distance between them ($W_p(\mu,\nu)>>1$) is too big. An interesting kernel of this type is
\begin{equation*}
K(\mu,\nu,\sigma)=K(\mu,\nu) =e^{-W_p(\mu,\nu)}. 
    \end{equation*} 
    \item \textit{  Threshold from above:} This type of kernels does not allow interaction between the representatives of the different populations if the distance between them is too big ($W_p(\mu,\nu)>\sigma$). The general form of this type of kernels is
\begin{equation*}
    K(\mu,\nu,\sigma)= \begin{cases} K_d(W_p(\mu,\nu)), \quad& W_p(\mu,\nu) \leq \sigma\\
            0, \quad &\text{otherwise}.
        \end{cases}
    \end{equation*}
    
    \item \textit{Threshold from below:} The following class of kernels prevent the convergence of different populations; that is, when the populations are getting to close,  $W_p(\mu,\nu)<\sigma$, they stop interacting. These kernel are given by
\begin{equation*}
    K(\mu,\nu,\sigma)= \begin{cases} 0, \quad &W_p(\mu,\nu) \leq \sigma\\
    K_d(W_p(\mu,\nu)), \quad &\text{otherwise}.
        \end{cases}
    \end{equation*}
\end{itemize}

Note that when the function $K(\cdot)$ is from the "Threshold from above" class, the threshold, $\sigma$, is intemperate as a population-wise bounded confidence level. The population-wise bounded confidence is a natural generalization of the individual bounded confidence but gives rich and distinct interpretation in the context of group dynamics. It imposes a hard cut-off on the interactions between individuals based on their group identities. An immediate result of such a threshold in the multi-population opinion dynamics is that agents might not be influenced by their close \textit{out-of-group} neighbors. 
Unlike "Threshold from above" the threshold, $\sigma$, for the class of  "Threshold from below" does not have its counterpart in the individual based models. However, its appearance in the population-dependent model is natural.  

%In addition, because in practical cases, there are a few sub-populations, the threshold can be defined pair-wise, even asymmetrically, based on the specifics of the populations. 

\begin{example}\textbf{Group Cohesion.}\ Group cohesion refers to the sense of solidarity, unity, and bonding that members feel toward their group. This can manifest through shared goals, values, or identities\cite{forsyth2018group}. High group cohesion and low group cohesion represent two ends of a spectrum that can significantly influence the functioning, performance, and well-being of a group. In the context of opinion dynamics, groups with weak group cohesion are more susceptible to external opinions because there's less internal pressure to maintain a consistent group viewpoint. These effects can be captured by the following kernel function:
\begin{equation*}
K(\mu^{\Lambda(i)}_0,\mu^{\Lambda(j)}_0,\sigma_i)= 
\begin{cases} \exp (-\Gamma W_p(\mu^{\Lambda(i)},\mu^{\Lambda(j)}))
, \quad &W_p(\mu^{\Lambda(i)},\mu^{\Lambda(j)}) \leq \sigma_{i}\\
        0,   \quad & \text{otherwise,}
    \end{cases}
\end{equation*}
where parameter $\Gamma > 0$ determines the degree of group cohesion. The Wasserstein distance between the opinion distributions of two groups to determine the level of interaction or influence they have on each other, modulated by the parameter $\Gamma$. A smaller $\Gamma$ suggests a stronger cohesive force within the group, making it less likely to be influenced by external opinions as long as the distance between group opinions is less than the threshold.
\begin{figure}[h]
    \centering
\includegraphics[width=0.8\textwidth]{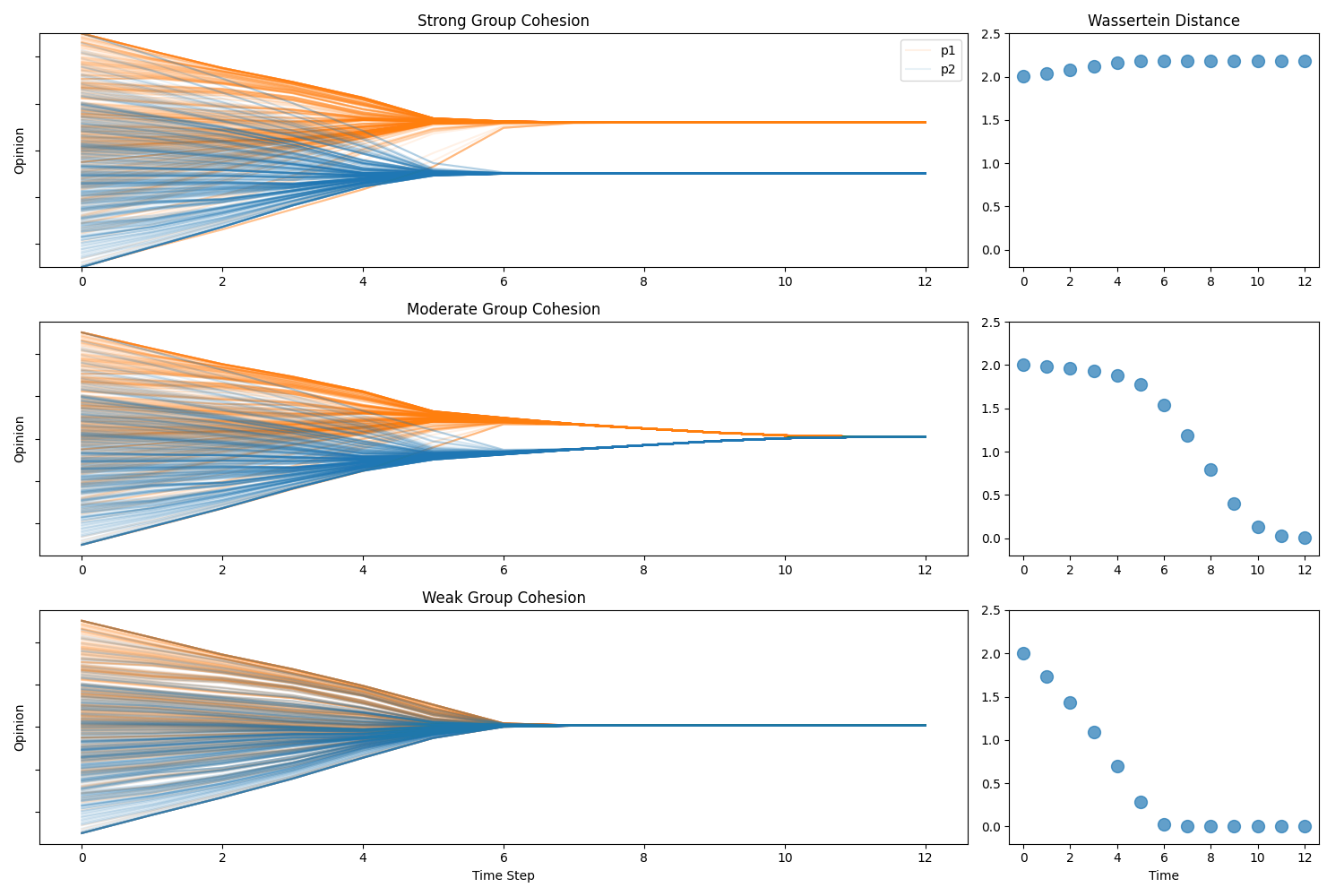}
    \caption{Group Cohesion: initial opinion profiles $\mu_0^{p1}$ and $\mu_0^{p2}$ are sampled from truncated Gaussian distribution, $N=400, \alpha=0.4, \varepsilon=0.4, \kappa_d=exp\ kernel$, see \eqref{expo}.}
    \label{group cohesion}
\end{figure}

The simulation results in Figure \ref{group cohesion} showcase the impact of group cohesion on opinion dynamics. In scenarios of strong group cohesion, opinions remain tightly clustered in group, indicating a group's resistance to external influence and a high degree of internal consensus. For moderate group cohesion, there is a noticeable spread in opinions in the early stage (time step 4 to 6) and consensus in the final stage, suggesting some openness to external ideas while retaining a level of common agreement. In contrast, weak group cohesion leads to straight concensus, reflecting a significant openness to external influences and a lack of group stance. The corresponding Wasserstein Distance plots affirm these observations. Note that this example is different from Example \ref{in-group-fav}, where in-group favoritism was given only based on the initial configurations.  
\end{example}

\subsection{Isolated Multi-Population Model}\label{sec:iso}  
Recall that a given population of  $N$ agents, $I$,  is divided into $p$ sub-populations. That is, for $k = 1, 2, ..., p $, $I = \cup_{k=1}^{p}\Lambda^k$, $|\Lambda^k|=n_k$  and $\sum_{k=1}^p n_k=N$. As before, for agent $i$ at time $t$, we denote its population and corresponding opinion profile by $\Lambda(i)$ and $\mathcal{X}_t^{\Lambda(i)}$. In this subsection, we assume that the sub-population $\Lambda(i)$ is only defined by the agents from the considered population $I$. This means that agents don’t know the actual state of their sub-population (actual distribution),  and they set  their sub-population state  based only on their peers in isolated population, $I$. In other words, the state or distribution of sub-population $\Lambda(i)$ is defined by the empirical distribution of agents' from $\Lambda(i)$. 
Thus, in the multi-population opinion dynamics in \eqref{def-multi-dyn}, population-dependent weights are updated based on empirical distributions. Particularly, the  dynamics is given
\begin{equation}\label{def-multi-dyn-iso}
\begin{cases}
     \frac{\d X^i_{t}}{\dt}
    =-\alpha_i X^i_t +\frac{\alpha_i}{Z_p} \sum_{j=1}^N K_d(\bar{p}_{ij}(t),\sigma_i)\kappa_d(X^i_{t},X^j_{t},\varepsilon_i)X^j_{t}\\
    X^{i}_0 \sim \bar{\mu}^i_0,
\end{cases}
\end{equation}
where 
\begin{equation*}
   \bar{p}_{ij}(t)= W_p(\bar{\mu}^{\Lambda(i)}_t,\bar{\mu}^{\Lambda(j)}_t),
\end{equation*} 
and  
\begin{equation}\label{def-empirical}
\bar{\mu}^{\Lambda(j)}_t(x)=\frac{1}{|\Lambda(i)|}\sum_{X_t^j\in \Lambda(i)}\delta_{X_t^j}(x),
\end{equation}
is empirical distribution of the population $\Lambda(j)$.

\begin{pro}\label{pro-well-pose-multi-iso-ODE} Let $p=1$ and 
$K_d:\Rr^+_0\times\Rr^+_0\to \Rr^+_0$, $\kappa_d:\Rr\times\Rr\times\Rr^+_0\to \Rr^+_0$ be  piece-wise continuous functions. Suppose that there exits $\delta_0>0$ such that $K_d(0,\sigma_i)\geq \delta_0$ and $ \kappa_d(x,x,\varepsilon_i)\geq \delta_0$ for all $x\in[-1,1]$. Then, the ODEs system in \eqref{def-multi-dyn-iso} has a global solution, $\{X_t^i\}_{i=1}^N$, and  $\{X_t^i\}_{i=1}^N \subset \Omega^N$.
\end{pro}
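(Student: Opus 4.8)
The plan is to observe that the hypothesis $p=1$ trivializes the group-level coupling, so that \eqref{def-multi-dyn-iso} collapses to the single-population system already handled in Proposition \ref{pro-well-pose-ODE}. When $p=1$, every agent belongs to the one sub-population $\Lambda(i)=I$, so the empirical distributions in \eqref{def-empirical} all coincide with $\bar\mu^I_t=\frac{1}{N}\sum_{k=1}^N\delta_{X^k_t}$. Hence $\bar p_{ij}(t)=W_p(\bar\mu^{\Lambda(i)}_t,\bar\mu^{\Lambda(j)}_t)=W_p(\bar\mu^I_t,\bar\mu^I_t)=0$ for every pair $i,j$ and every $t$, by the identity-of-indiscernibles property of $W_p$ recalled in Section \ref{sec:2}.

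First I would substitute $\bar p_{ij}(t)\equiv 0$ into \eqref{def-multi-dyn-iso}. Then $K_d(\bar p_{ij}(t),\sigma_i)=K_d(0,\sigma_i)$ is a positive constant independent of $j$, and it occurs as a common factor both in the drift sum and in the normalizer $Z_p=K_d(0,\sigma_i)\sum_{j=1}^N\kappa_d(X^i_t,X^j_t,\varepsilon_i)$; here $Z_p>0$ because $K_d(0,\sigma_i)\geq\delta_0$ and the $j=i$ term satisfies $\kappa_d(X^i_t,X^i_t,\varepsilon_i)\geq\delta_0$ whenever $X^i_t\in[-1,1]$. Cancelling this factor shows that \eqref{def-multi-dyn-iso} coincides verbatim with the system \eqref{general_dyn-2} (equivalently \eqref{dyn-one-eq}), with $Z=\sum_{j=1}^N\kappa_d(X^i_t,X^j_t,\varepsilon_i)$.

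It then remains only to check that the hypotheses of Proposition \ref{pro-well-pose-ODE} are in force: $\kappa_d(\cdot,\cdot,\varepsilon_i)$ piece-wise continuous on $\Rr^2$ and $\kappa_d(x,x,\varepsilon_i)\geq\delta_0>0$ on $[-1,1]$ — both assumed here. The one cosmetic discrepancy is that Proposition \ref{pro-well-pose-ODE} is phrased for kernels valued in $[0,1]$ while we only assume $\kappa_d$ is $\Rr^+_0$-valued; I would note that the upper bound is never used in that proof — only non-negativity (for the convex-hull invariance of $S(t)$) and strict positivity on the diagonal (to keep $Z$ away from $0$). Then I would simply invoke Proposition \ref{pro-well-pose-ODE} to obtain a global solution $\{X^i_t\}_{i=1}^N$ with $\{X^i_t\}_{i=1}^N\subset\Omega^N$.

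There is essentially no analytic obstacle: the Filippov existence theory for the piece-wise continuous right-hand side and the monotonicity of $s_{\min}$, $s_{\max}$ forcing $\Omega^N$-invariance have already been carried out in Proposition \ref{pro-well-pose-ODE}. The only thing to get right is the bookkeeping — verifying the cancellation of $K_d(0,\sigma_i)$, checking $Z_p\neq 0$, and matching the slightly weaker kernel hypotheses to those of Proposition \ref{pro-well-pose-ODE}. I would also remark that the restriction $p=1$ is precisely what makes $\bar p_{ij}\equiv 0$: for $p\geq 2$ the empirical distributions evolve in time and the drift acquires genuine inter-group coupling, so this reduction no longer applies.
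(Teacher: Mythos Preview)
You have misread the hypothesis. In this proposition $p=1$ refers to the order of the Wasserstein distance $W_p$ appearing in $\bar p_{ij}(t)=W_p(\bar\mu^{\Lambda(i)}_t,\bar\mu^{\Lambda(j)}_t)$, not to the number of sub-populations (the paper unfortunately overloads the symbol $p$). The proposition is meant to cover the genuinely multi-population isolated model \eqref{def-multi-dyn-iso} with $W_1$; your reduction to the single-population case, with $\bar p_{ij}\equiv 0$ and the $K_d$-factor cancelling, therefore proves a triviality and not the stated result.

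The paper's argument is different in substance. With $p=1$ and in one space dimension, the $1$-Wasserstein distance between two empirical measures has the closed form
\[
W_1(\bar\mu^{\Lambda(i)}_t,\bar\mu^{\Lambda(j)}_t)=\int_\Omega\big|F^{\Lambda(i)}(y)-F^{\Lambda(j)}(y)\big|\,\d y,
\]
where each empirical CDF $F^{\Lambda(k)}(y)=\frac{1}{|\Lambda(k)|}\sum_{l}\mathbf 1_{X^l_t\leq y}$ is a piece-wise continuous function of the state vector $(X^1_t,\dots,X^N_t)$. Hence $G^{i,j}(\mathbf X_t)=K_d\big(W_1(\bar\mu^{\Lambda(i)}_t,\bar\mu^{\Lambda(j)}_t),\sigma_i\big)$ is piece-wise continuous, and so is the full right-hand side of \eqref{def-multi-dyn-iso}. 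From that point one \emph{does} argue exactly as in Proposition~\ref{pro-well-pose-ODE}: Filippov existence for autonomous piece-wise continuous right-hand sides, the rewriting $F_i(\mathbf X_t)=\alpha_i\sum_j(\cdots)(X^j_t-X^i_t)$, and the convex-hull monotonicity of $s_{\min}(t),s_{\max}(t)$ to keep the solution in $\Omega^N$. The lower bounds $K_d(0,\sigma_i)\geq\delta_0$ and $\kappa_d(x,x,\varepsilon_i)\geq\delta_0$ ensure $Z_p>0$ via the $j=i$ term, since $\bar p_{ii}(t)=0$. What you are missing is precisely the step that the time-varying group coupling is itself piece-wise continuous in the state; once you supply that, the rest of your invocation of Proposition~\ref{pro-well-pose-ODE} is correct.
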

\begin{proof}  
    Note that the cumulative distribution function (CDF) of the sub-population $\Lambda(j)$ is a piece-wise continuous function  
\begin{equation*}
F^{\Lambda(j)}(y)=\frac{1}{|\Lambda(j)|}\sum_{k=1}^{|\Lambda(j)|}\bold{1}_{X_t^k\leq y}.
\end{equation*}
    Because we are in one dimension and $p=1$, we have 
\begin{equation*}
W_1(\bar{\mu}^{\Lambda(i)}_t,\bar{\mu}^{\Lambda(j)}_t)=\int_{\Omega} \big|F^{\Lambda(i)}(y) - F^{\Lambda(j)}(y)\big| \dy. 
\end{equation*}
Therefore, 
\begin{equation*}
    G^{i,j}((X_1,\dots,X_{n_i}),(X_1,\dots,X_{n_j}))=K_d(W_p(\bar{\mu}^{\Lambda(i)}_t,\bar{\mu}^{\Lambda(j)}_t),\sigma_i),
    \end{equation*}
     is a piece-wise continuous function, hence, the right-hand side of \eqref{def-multi-dyn-iso} as well. Thus, arguing as in Proposition \ref{pro-well-pose-ODE}, we complete the proof.
\end{proof}
%\begin{remark}\lab%el{rem-loc-info} 
Note that in \eqref{def-multi-dyn-iso}, we assume that agents are aware of the whole population state; that is, each agent knows the distributions (empirical distributions) of all sub-populations. So, to update their opinions, agents use that information.
%\end{remark}
However, there are systems where agents do have only local information. Next, in detail, we explain what we mean by local information. Recall that the bounded confidence level, $\varepsilon_i$,  of agent $i$ defines the radius of influence on agent $i$. By a scope of the agent, we understand  the maximal neighborhood of the agent that it has access. Let $s_i\geq\varepsilon_i$ be the scope of agent $i$. Agent $j$ is in the scope of agent $i$ at time $t$ if $|X_t^i-X_t^j|\leq s_i$ and agent $i$ has access to the agent $j$ state $X^j_t$. If in addition $|X_t^i-X_t^j|\leq\varepsilon_i\leq s_i$, then, agent $j$ effect  $i$ and the dynamics is given by
\begin{equation}\label{def-multi-dyn-iso-scope}
\begin{cases}
     \frac{\d X^i_{t}}{\dt}
    =-\alpha_i X^i_t +\frac{\alpha_i}{Z_p} \sum_{j=1}^N K_d(\bar{p}^{s_i}_{ij},\sigma_i)\kappa_d(X^i_{t},X^j_{t},\varepsilon_i)X^j_{t}\\
    X^{i}_0 \sim \bar{\mu}^i_0,
\end{cases}
\end{equation}
where $ \bar{p}^{s_i}_{ij}(t)= W_p(\bar{\mu}^{\Lambda_{s_i}(i)}_t,\bar{\mu}^{\Lambda_{s_i}(j)}_t)$ with $\Lambda_{s_i}(j)$ being the set of all agents from sub-population $\Lambda(j)$ in the scope of agent $i$ and $\bar{\mu}^{\Lambda_{s_i}(j)}_t$ is the empirical distribution of the population $\Lambda(j)$ in the scope agent $i$ or it is the empirical distribution defined by $\Lambda_{s_i}(j)$. Note that Proposition \ref{pro-well-pose-multi-iso-ODE} also holds for \eqref{def-multi-dyn-iso-scope}.

% \begin{example}
% \textcolor{red}{need to modify codes}
% \end{example}

\subsection{Integrated Multi-Population Model} \label{sec:inte} 
Here, we assume that the number of agents is very large (infinity) and to describe the opinion evolution of the population, we follow the mean-field models in \cite{MF-op,MF-2,Tarek} and consider the mean-field limit of  
\eqref{def-multi-dyn-iso}. In contrast to the isolated model, in this case, the agents have access to the actual distribution of their population. That is, each agent is integrated with its population.

We start our discussion with a single population case. For a given single population with $N$ agents, we consider the  dynamics in \eqref{def-multi-dyn} (or \eqref{def-multi-dyn-iso} for a single population)
\begin{equation}\label{def-general_dyn-3}
\begin{cases}
     \frac{\d X^i_{t} }{\dt}
    =-\alpha X^i_t +\frac{\alpha}{Z(X_t^i,\varepsilon)}\sum_{j=1}^N\kappa_d(X_t^i,X_t^j,\varepsilon)X_t^j\\
    X^i_0 \sim \mu_0,
\end{cases}
\end{equation}
with 
\begin{equation*}
   Z (X_t^i ,\varepsilon)= \sum_{j=1}^N  \kappa_d(X_t^i ,X_t^j,\varepsilon).
\end{equation*}
Note that  $\delta_{X_t^i}(X_t^j)=0$ if $i\neq j$ and $\delta_{X_t^i}(X_t^i)=1$. Using this and recalling the definition in \eqref{def-empirical}, heuristically, multiplying and dividing the right-hand sides in \eqref{def-general_dyn-3} by $N$ and letting $N\to \infty$, we get  
\begin{equation}\label{def-mf-dynamics}
\begin{cases}
       \frac{\d X_t}{\dt} =-\alpha X_t+\frac{\alpha }{\varphi(X_t,\mu)}\int_{\Omega} \kappa_d(X_t ,y,\varepsilon)y~\d \mu(y) \\
        X_0\sim \mu_0,
\end{cases}
\end{equation} 
where $\mu(t)$ is distribution of the population and
\begin{equation*}
\varphi(X_t,\mu)= \int_{\Omega} \kappa_d(X_t,\tau,\varepsilon)~\d \mu(\tau).
\end{equation*}
For more details on the limiting argument, see \cite{MP-main, hol2}.

By letting  $N\to \infty$ in \eqref{def-general_dyn-3}, we describe (approximate) the whole population behavior by the representative agent, which dynamics is given by \eqref{def-mf-dynamics}.

Next, we prove the well-posedness of \eqref{def-mf-dynamics} and its solution connection with the solutions of a continuity equation. To do so, we need the following assumption.
\begin{hyp}\label{assump-Lip-ker}
    Let $\varepsilon>0$, $\kappa_d(\cdot,\varepsilon)\in C^1(\Rr^2)\in$ and $\kappa^e_d(x,y,\varepsilon)=\kappa_d(x,y,\varepsilon)y$. Suppose that $\kappa_d(\cdot,\varepsilon)$ and $\kappa^e_d(\cdot,\varepsilon)$ are Lipschitz  in $\Rr^2$. Furthermore,  assume that  there exits $c>0$ such that  $c\leq \kappa_d(x,y,\varepsilon)\leq 1$ for all $x,y\in\Rr$.
\end{hyp}
The following is an example of a class of functions that satisfies the conditions in Assumption \ref{assump-Lip-ker}.   
\begin{example}\label{exam-ass-Lip-ker}
    Let $\varepsilon>0$ and the function $\kappa_d(\cdot,\varepsilon)$ is Lipschitz  in $[-2,2]^2$. Suppose that  there exits $c>0$ such that $c =\kappa_d(x,y,\varepsilon)$ for all $x,y\in \Rr\setminus[-2,2]$ and  $c\leq \kappa_d(x,y,\varepsilon)\leq 1$ for all $x,y\in\Rr$.
\end{example}
Note that the condition that the function, $\kappa^x_d(\cdot)$, is Lipschitz in $\Rr$ (or in Example \ref{exam-ass-Lip-ker} the condition, $c =\kappa_d(x,y,\varepsilon)$ for all $x,y\in \Rr\setminus[-2,2]$) in   Assumption \ref{assump-Lip-ker} is not shrinking the set of choice of the kernel function, $k_d$, because in our setting the models are determined by the values of $k_d$ only defined on $\Omega$. The mentioned condition is only needed for technical reasons.

The well-posedness of \eqref{def-mf-dynamics} is given in the following proposition.

\begin{teo}
Let $T,\varepsilon>0$ and suppose that Assumption \ref{assump-Lip-ker} holds. Then, there exists unique $X_t\in C^1[0,T]$  solving \eqref{def-mf-dynamics}. Moreover, the population distribution, $\mu(t,\cdot)$,  solves the continuity equation
\begin{equation}\label{def-cont-eq}
    \begin{cases}
\mu_t+\div\Big(\Big(-\alpha x+\frac{\alpha}{\varphi(x)} \int_{\Omega} \kappa_d(x,y,\varepsilon)y~\d \mu(t,y)\Big)\mu(t,x)\Big)=0\\
\mu(0,x)=\mu_0,
    \end{cases}
\end{equation}
in the sense of distributions.
\end{teo}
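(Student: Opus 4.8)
The plan is to regard \eqref{def-mf-dynamics} as a McKean--Vlasov (self-consistency) problem --- the drift depends on $\mu$, which is itself the law of $X_t$ --- and to solve it by a Banach fixed-point argument on a space of measure flows, after which the continuity equation \eqref{def-cont-eq} follows from the standard push-forward and chain-rule computation. As a preliminary step I would record an a priori confinement: rewriting the drift in \eqref{def-mf-dynamics} as $\frac{\alpha}{\varphi(X_t,\mu)}\int_\Omega\kappa_d(X_t,y,\varepsilon)(y-X_t)\,\d\mu(y)$ and arguing exactly as in Proposition \ref{pro-well-pose-ODE} (inspect the maximal and minimal points of $\supp\mu_t$ and use $\kappa_d\ge 0$), one gets $X_t\in\Omega$ for all $t$ whenever $\supp\mu_0\subseteq\Omega$, hence $\supp\mu_t\subseteq\Omega$. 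Consequently $\varphi(x,\mu)=\int_\Omega\kappa_d(x,\tau,\varepsilon)\,\d\mu(\tau)\ge c>0$ by Assumption \ref{assump-Lip-ker}, so the vector field $b(x,\mu):=-\alpha x+\frac{\alpha}{\varphi(x,\mu)}\int_\Omega\kappa_d(x,y,\varepsilon)y\,\d\mu(y)$ is well defined and bounded on $\Rr\times\mathcal P(\Omega)$.

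Next I would set up the fixed-point map on $\Cg([0,T];\mathcal P(\Omega))$ with the sup-in-time $W_1$ metric: given a flow $(\nu_t)_{t\in[0,T]}$, let $\Phi^\nu_t$ denote the flow of the non-autonomous ODE $\dot Y_s=b(Y_s,\nu_s)$ and put $\mathcal T(\nu)_t:=(\Phi^\nu_t)_\#\mu_0$; a fixed point of $\mathcal T$ is exactly the law of a solution of \eqref{def-mf-dynamics}, and the confinement above (applied with $\nu$ in the role of $\mu$) shows $\mathcal T$ maps the space into itself. The heart of the argument is a pair of Lipschitz bounds on $b$: that it is Lipschitz in $x$ uniformly in $\nu\in\mathcal P(\Omega)$, and Lipschitz in $\nu$ for the $W_1$ distance uniformly in $x$. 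Since $\kappa_d(\cdot,\varepsilon)$ and $\kappa^e_d(x,y,\varepsilon)=\kappa_d(x,y,\varepsilon)y$ are Lipschitz on $\Rr^2$ with $c\le\kappa_d\le 1$ (Assumption \ref{assump-Lip-ker}), the numerator $x\mapsto\int\kappa^e_d(x,y,\varepsilon)\,\d\nu(y)$ and the denominator $\varphi(\cdot,\nu)$ are both bounded, bounded away from $0$ in the case of $\varphi$, and Lipschitz in $x$ uniformly in $\nu$, so the quotient rule gives the first bound; for the second, Kantorovich--Rubinstein duality gives $|\varphi(x,\nu)-\varphi(x,\tilde\nu)|\le\mathrm{Lip}\,W_1(\nu,\tilde\nu)$ and the analogous estimate for the numerator, and the quotient rule again yields $|b(x,\nu)-b(x,\tilde\nu)|\le C\,W_1(\nu,\tilde\nu)$.

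With these two properties, Gronwall's inequality applied to $s\mapsto|\Phi^\nu_s(x)-\Phi^{\tilde\nu}_s(x)|$ gives $\sup_{s\le t}|\Phi^\nu_s(x)-\Phi^{\tilde\nu}_s(x)|\le C\int_0^t W_1(\nu_s,\tilde\nu_s)\,\d s$; integrating against $\mu_0$ and using $W_1(f_\#\mu_0,g_\#\mu_0)\le\int|f-g|\,\d\mu_0$ then gives $W_1(\mathcal T(\nu)_t,\mathcal T(\tilde\nu)_t)\le C\int_0^t W_1(\nu_s,\tilde\nu_s)\,\d s$, so $\mathcal T$ is a contraction for small $T$ (or for any $T$ after passing to an equivalent Bielecki-type weighted norm). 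The Banach fixed-point theorem then yields existence and uniqueness --- first on a short interval, then on $[0,T]$ by concatenation --- and once $t\mapsto\mu_t$ is known to be $W_1$-continuous the right-hand side of \eqref{def-mf-dynamics} is continuous along the solution, giving $X_t\in C^1[0,T]$. For the continuity equation, for $\phi\in C_c^\infty(\Rr)$ one writes $\int\phi\,\d\mu_t=\int\phi(\Phi_t(x))\,\d\mu_0(x)$, differentiates under the integral (legitimate by the uniform bound on $b$ and the $C^1$-in-$t$ regularity of $\Phi_t$), and applies the chain rule to obtain $\frac{\d}{\dt}\int\phi\,\d\mu_t=\int\phi'(y)\,b(y,\mu_t)\,\d\mu_t(y)$, which is precisely the distributional form of \eqref{def-cont-eq}.

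The step I expect to be the main obstacle is the second pair of Lipschitz estimates: because the drift depends on the law through the normalization $\varphi$ in a genuinely nonlinear, quotient fashion, the textbook McKean--Vlasov theory for affine or bounded-Lipschitz interaction kernels does not apply verbatim. What makes it go through is the combination of the uniform lower bound $\kappa_d\ge c$ --- which keeps $\varphi$ away from zero so the quotient stays controlled --- with the a priori confinement to the compact set $\Omega$, which keeps the numerator, $\varphi$, and their Lipschitz constants uniformly bounded; granting these, converting the ``affine'' estimates into the quotient ones is a routine computation, as is the remainder of the argument.
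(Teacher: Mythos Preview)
Your proposal is correct and follows essentially the same route as the paper: Lipschitz bounds on the drift in both $x$ and $\mu$ (with respect to $W_1$), obtained via Kantorovich--Rubinstein duality and the quotient rule using the lower bound $\kappa_d\ge c$, followed by a Banach fixed-point argument on measure flows and the standard push-forward derivation of the continuity equation. The only organizational differences are that the paper first extends the problem to $\Rr$ via a truncated kernel $\bar\kappa_d$ and invokes an external reference for the fixed-point step, establishing confinement to $\Omega$ a posteriori, whereas you argue confinement first (for the auxiliary flow with frozen $\nu$) and then run the contraction directly on $\Cg([0,T];\mathcal P(\Omega))$; both orderings work and the substance is identical.
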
 provides the existence and uniqueness of solutions to the system of continuity equations and its connection to our integrated opinion dynamics model.

\begin{proof}
Consider the following  equation in whole $\Rr$; that is,
\begin{equation}\label{def-mf-R-dynamics}
\begin{cases}
   \frac{\d X_t}{\dt} =-\alpha X_t+\frac{\alpha }{\varphi_R(X_t,\mu)}\int_{\Rr}\bar{\kappa}_d(X_t,y,\varepsilon)y~\d \mu(y)=b(X_t,\mu) \\
        X_0\sim \mu_0,
\end{cases}
\end{equation} 
where
\begin{equation}\label{def-phiR}    \varphi_R(x,\mu)= \int_{\Rr} \kappa_d(x,\tau,\varepsilon)~\d \mu(\tau),
\end{equation}
and $\bar{k_d}(\cdot,\varepsilon)\in C^1(\Rr^2)$ with
\begin{equation*}
\bar{k}_d(x,y,\varepsilon)= \begin{cases} \kappa_d(x,y,\varepsilon),\quad (x,y) \in [-2,2]^2\\
0,\quad (x,y)\in \Rr^2\setminus[-3,3]^2.
\end{cases}
\end{equation*}
First, we prove that \eqref{def-mf-R-dynamics} and its corresponding continuity equation (see \eqref{def-cont-eq2}) has unique solution. Then, we prove that for the solution the equations in \eqref{def-mf-R-dynamics} and \eqref{def-mf-dynamics} coincides.  
 Denoting 
\begin{equation}\label{def-J}
    J(x,\mu)= \int_{\Rr} \bar{\kappa}_d(X_t,\tau,\varepsilon)\tau~\d \mu(\tau),
\end{equation}
 using the definition of $\bar{k}_d$ and   Assumption \ref{assump-Lip-ker} for $x_1,x_2\in\Rr$ and $\mu_1,\mu_2\in\mathcal{P}_1(\Rr)$, we get    
\begin{equation}\label{proof-ex-mf-one}
\begin{split}
|J(x_1,\mu_1)-J(x_2,\mu_2)|&=\Bigg|\int_{\Rr} \bar{\kappa}_d(x_1,\tau,\varepsilon)\tau~\d \mu_1(\tau)-\int_{\Rr} \bar{\kappa}_d(x_2,\tau,\varepsilon)\tau~\d \mu_2(\tau)\Bigg|\\&\leq\int_{\Rr} |(\bar{\kappa}_d(x_1,\tau,\varepsilon)-\bar{\kappa}_d(x_2,\tau,\varepsilon))\tau|~\d \mu_2(\tau)\\
&+\Bigg|\int_{\Rr}\bar{\kappa}_d(x_1,\tau,\varepsilon)\tau~\d (\mu_1(\tau)-\mu_2(\tau))\Bigg|\\&\leq C\Big(|x_1-x_2|+W_1(\mu_1,\mu_2)\Big),
\end{split} 
\end{equation}
where the constant $C$ does not depend on $x_1,x_2$ and $\mu_1,\mu_2$.  Similarly, we  prove that 
\begin{equation}\label{proof-one-varphi-Lip}
\Big|\varphi_R(x_1,\mu_1)-\varphi_R(x_2,\mu_2)\Big|\leq C\Big(|x_1-x_2|+W_1(\mu_1,\mu_2)\Big).
\end{equation}
Using the preceding estimate, \eqref{proof-ex-mf-one}, Assumption \ref{assump-Lip-ker} and the inequalities $\frac{1}{\varphi_R(x,\mu)}\leq\frac{1}{c}$, $\varphi_R(x,\mu)\leq c$, $|J(x,\mu)|\leq C$, we obtain
\begin{equation*}
\begin{split}
\Bigg|\frac{J(x_1,\mu_1)}{\varphi_R(x_1,\mu_1)}-\frac{J(x_2,\mu_2)}{\varphi_R(x_2,\mu_2)}\Bigg|&\leq\frac{|J(x_1,\mu_1)\varphi_R(x_2,\mu_2)-J(x_2,\mu_2)\varphi_R(x_1,\mu_1)|}{\varphi_R(x_1,\mu_1)\varphi_R(x_2,\mu_2)}\\&\leq \frac{1}{c^2}\Bigg(\Big|\varphi_R(x_2,\mu_2)(J(x_1,\mu_1)-J(x_2,\mu_2))\Big|\Bigg)\\&+\frac{1}{c^2}\Bigg(\Big|J(x_2,\mu_2)(\varphi_R(x_2,\mu_2)-\varphi_R(x_1,\mu_1))\Big|\Bigg)\\&\leq C\Big(|x_1-x_2|+W_1(\mu_1,\mu_2)\Big).
\end{split}
\end{equation*}
Hence, the vector field $b$ is globally Lipschitz; that is, 
\begin{equation}\label{proof-eq5-one}
    (b(x_1,\mu_1)-b(x_2,\mu_2))\leq C \Big(|x_1-x_2|+ W_1(\mu_1,\mu_2)\Big).
\end{equation}
Therefore, by Theorems 2 and 3 in \cite{ContWassEx} the continuity equation corresponding to \eqref{def-mf-R-dynamics}
\begin{equation}\label{def-cont-eq2}
    \begin{cases}
\mu_t+\div\Big(b(x,\mu)\mu(t,x)\Big)=0\\
\mu(0,x)=\mu_0,
    \end{cases}
\end{equation}
has unique solution $\mu(t)\in C([0,T],\mathcal{P}_c^{Ac})$.
 Thus, $b(x,\mu)=\mathbf{b}(t,x)$ is  in time, and by \eqref{proof-eq5-one}   it is globally Lipschitz in $x$ as well. Therefore, by the Picard–Lindel{\"o}f (Carath{\'e}odory) existence and uniqueness result (see Theorem 2.2 in \cite{ODE2012}), the ODE in \eqref{def-mf-R-dynamics} has a unique  solution, $X_t\in C^1[0,T]$. 
 
 Next, we  prove that $X_t\in\Omega$. Note that because $\mu(t)\in\mathcal{P}(\Rr)$, we rewrite  the first equation in \eqref{def-mf-R-dynamics} as follows
\begin{equation}\label{pro-proof-one-eq2}
    \frac{\d X_t}{\dt}=-\frac{\alpha}{\varphi(X_t,\mu)} \Bigg(\int_{\Rr} \bar{\kappa}_d(X_t,\tau,\varepsilon)\Big(X_t-\tau\Big)~\d \mu(\tau)\Bigg).
\end{equation}
Recalling that $\supp(\mu(0))\subseteq\Omega=[-1,1]$,  and using the continuity of the solution to \eqref{def-mf-R-dynamics}, we denote by $0\leq t_e\leq T$ the smallest time for which $X_{t_e}=1$.   If such $t_e$ does not exist or $t_e=T$, the proof is completed. Now, suppose that $0\leq t_e<T$. Note that because the solution to \eqref{def-cont-eq2}, $\mu(t)$, is the push of $\mu(0)$ by $X_t$, then, $\supp(\mu(t))\subseteq \Omega$ for all $t\in [0,t_e]$. Hence, we have
\begin{equation*}
    \int_{\Rr} \bar{\kappa}_d(X_{t_e},\tau,\varepsilon)\Big(X_{t_e}-\tau\Big)~\d \mu(\tau,t_e)=\int_{\Omega}\bar{\kappa}_d(1,\tau,\varepsilon)\Big(1-\tau\Big)~\d \mu(\tau,t_e)\geq 0.
\end{equation*}
 Therefore, from \eqref{pro-proof-one-eq2} it follows that $\dot{X}_{t_e}\leq 0$, which implies that there exists $\Delta t$ such that  $X_t\leq 1$, for all and  $t\in [0,t_e+\Delta t]$.
 Repeating the same arguments finite many times, we conclude that $X_t\leq 1$, for all $t\in [0,T]$. Similarly, we prove that $X_t\geq -1$, for all  $t\in [0,T]$. Thus, we have proved that $\supp(\mu(t))\subseteq\Omega
$. Hence, \eqref{def-mf-R-dynamics} coincides with \eqref{def-mf-dynamics}. This completes the first part of the proof.

Noticing that
 $\mu(t)$ is the push forward of $\mu_0$ by the diffeomorphisms defined by \eqref{def-mf-dynamics} (for more details see Theorem 2.7 in \cite{ContEqSystemEx}), we deduce that $\mu$ solves \eqref{def-cont-eq}.
\end{proof}

Now, we are ready to discuss the multi-population case.
Let there are $P$ sub-populations and assume that the number of agents in each sub-population is very large and they are in the same order as the whole population; that is, there exists $\lambda_1,\dots,\lambda_P \in (0,1)$ such that $\sum_{k=1}^{P}\lambda_k=1$ and $|V^k|=n_k= \lambda_k N$. Therefore, similar to the previous case, by multiplying and dividing the second term in \eqref{multi-p-decomp}   by $N$ and letting $N\to \infty$, we get
\begin{equation}\label{def-multi-mf-dynamics}
\begin{cases} 
\frac{\d X^k_t}{\dt}=-\alpha_k X^k_t+\frac{\alpha_k}{\psi(X_t^k,\mu^k)} \sum_{r=1}^P\lambda_r K_d(p_{kr}(t),\sigma_k)\int_{\Omega} \kappa^k_d(X^k_t,y,\varepsilon_k)y~\d \mu^r(y) \\
        X^k_0\sim \mu^k_0,
\end{cases}
\end{equation} 
where  $ p_{kr}(t)= W_p(\mu^{\Lambda_k}_t,\mu^{\Lambda_r}_t)
$, 
  $\mu^k(t)$ is the distribution of the sub-population $k$  and
\begin{equation}\label{def-Zp-law}
\psi(X^k_t,\mu^k)=\sum_{r=1}^P\lambda_r K_d(\mu^k(t),\mu^r(t),\sigma_k)\int_{\Omega}\kappa^k_d(X^k_t,y,\varepsilon_k)~\d \mu^r(y).
\end{equation}
As in the case of a single population, here, as well, the sub-populations are substituted by their representative agents (e.g. $X_t^k$ is the representative agent of the sub-population $\Lambda^k$) and the system dynamics is governed by \eqref{def-multi-mf-dynamics}.

\begin{hyp}\label{assump-Lip-pop-ker}
    Suppose that the function $K_d(\cdot,\cdot)$ is Lipschitz  in $\Rr^2$ and satisfies  $c\leq K_d(\cdot,\cdot)\leq \frac{1}{c}$ for some $c>0$. 
\end{hyp}

The following result proves the well-posedness of the system in \eqref{def-multi-mf-dynamics} and makes a connection between the solutions of \eqref{def-multi-mf-dynamics} with the system of continuity equations. This system is deterministic and may be used for numerical approximation. 

\begin{teo}
\label{main} 
Let $T>0$ and   $\varepsilon_k, \sigma_k>0$, for $k=1,\dots,P$.  Suppose that Assumption \ref{assump-Lip-pop-ker} holds and for all $k=\{1,\dots,P\}$ the kernels, $\kappa_d^k (\cdot, \varepsilon_k)$, satisfy   Assumptions \ref{assump-Lip-ker}. Then, there exists a unique $\{X^k_t\}_{k=1}^{P}\in (C^1[0,T])^P$ solving system of DEs in \eqref{def-multi-mf-dynamics}. Moreover, the corresponding distributions, $\{\mu^k(t,\cdot)\}_{k=1}^{P}$, of the sub-populations solve the
system of continuity equations   
\begin{equation}\label{def-cont-eq-multi}
\begin{cases}
\mu^k_t+\Big(\Big(-\alpha_k x+\frac{\alpha_k}{\psi(x,\mu^k)} \sum_{r=1}^P\lambda_rK_d(p_{kr}(t),\sigma_k)\int_{\Omega} \kappa_d(x,y,\varepsilon_k)y~\d \mu^r(y) \Big)\mu^k\Big)_x=0\\   \mu^k(0,x)=\mu^k_0,
    \end{cases}
\end{equation}
in the sense of distributions. 
\end{teo}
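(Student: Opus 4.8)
The plan is to carry out the same scheme as in the single-population case \eqref{def-mf-dynamics}--\eqref{def-cont-eq2}, now on the product state space $\Rr^P$ and the product measure space $\mathcal{P}_1(\Rr)^P$ equipped with the distance $d_P(\vec\mu,\vec\nu)=\sum_{k=1}^P W_1(\mu^k,\nu^k)$, where $\vec\mu=(\mu^1,\dots,\mu^P)$. First I would truncate each in-group kernel exactly as before: replace $\kappa_d^k$ by some $\bar\kappa_d^k\in C^1(\Rr^2)$ with $\bar\kappa_d^k=\kappa_d^k$ on $[-2,2]^2$ and $\bar\kappa_d^k=0$ off $[-3,3]^2$, still Lipschitz and still bounded below by $c$ on $[-2,2]^2$. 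This yields a globally defined coupled system $\dot X^k_t=b^k(X^k_t,\mu^1,\dots,\mu^P)$ on $\Rr^P$ that agrees with \eqref{def-multi-mf-dynamics} as long as every $\supp\mu^r(t)\subseteq\Omega$.

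The only genuinely new estimate is the Wasserstein stability of the inter-group arguments $p_{kr}=W_1(\mu^k,\mu^r)$: by the triangle inequality for $W_1$, $|p_{kr}(\vec\mu)-p_{kr}(\vec\nu)|\le W_1(\mu^k,\nu^k)+W_1(\mu^r,\nu^r)\le d_P(\vec\mu,\vec\nu)$, so Assumption \ref{assump-Lip-pop-ker} makes each coefficient $K_d(p_{kr},\sigma_k)$ Lipschitz in $\vec\mu$, and the same assumption together with $\sum_r\lambda_r=1$ gives, via \eqref{def-Zp-law}, the uniform bounds $c^2\le\psi(X^k_t,\mu^k)\le 1/c$ and joint Lipschitz continuity of $\psi$. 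Combining these facts with the single-population estimates applied to each $\bar\kappa_d^k$ --- the analogue of \eqref{proof-ex-mf-one} for the in-group integrals $\int\bar\kappa_d^k(x,y,\varepsilon_k)y\,\d\mu^r(y)$ and for their normalizers, followed by the quotient estimate used in the previous proof --- yields that every $b^k$ is globally Lipschitz, $|b^k(x,\vec\mu)-b^k(\tilde x,\vec\nu)|\le C(|x-\tilde x|+d_P(\vec\mu,\vec\nu))$ with $C$ independent of all arguments.

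With the joint Lipschitz bound in hand, the coupled continuity system \eqref{def-cont-eq-multi} is solved by the usual fixed-point argument: given $\vec\mu\in C([0,T],\mathcal{P}_1(\Rr)^P)$ the fields $\mathbf{b}^k(t,x):=b^k(x,\mu^1(t),\dots,\mu^P(t))$ are Carath{\'e}odory and uniformly Lipschitz in $x$, hence generate flows $\Phi^k_t$, and the map $\vec\mu\mapsto\big((\Phi^k_t)_{\#}\mu^k_0\big)_{k=1}^P$ is, by Gronwall together with the estimate above, a contraction on $C([0,\tau],\mathcal{P}_1(\Rr)^P)$ for $\tau$ small, which after iteration covers $[0,T]$. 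This produces a unique $\vec\mu\in C([0,T],\mathcal{P}_1(\Rr)^P)$ solving \eqref{def-cont-eq-multi} in the sense of distributions, each $\mu^k(t)$ being the pushforward of $\mu^k_0$ along the $k$-th flow; the Picard--Lindel{\"o}f (Carath{\'e}odory) theorem then gives the unique $\{X^k_t\}_{k=1}^P\in(C^1[0,T])^P$ solving the truncated \eqref{def-multi-mf-dynamics}, exactly as in the single-population proof. One may equally invoke the system versions of the results of \cite{ContWassEx} and \cite{ContEqSystemEx} directly.

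Finally I would remove the truncation by the barrier argument. Let $T^\ast$ be the largest time with $\supp\mu^r(s)\subseteq\Omega$ for every $r$ and every $s\le T^\ast$, and suppose for contradiction that $T^\ast<T$; then some trajectory of some $k$-th flow, started in $\supp\mu_0^k\subseteq\Omega$, first reaches $\{\pm1\}$ at a time $t_e\le T^\ast$, say at the value $1$. Writing the $k$-th equation as $\dot X^k_t=-\frac{\alpha_k}{\psi(X^k_t,\mu^k)}\sum_{r=1}^P\lambda_r K_d(p_{kr},\sigma_k)\int\bar\kappa_d^k(X^k_t,\tau,\varepsilon_k)(X^k_t-\tau)\,\d\mu^r(\tau)$ and noting that at $t=t_e$ every factor is nonnegative ($\lambda_r\ge0$, $K_d\ge0$, $\bar\kappa_d^k(1,\cdot,\varepsilon_k)\ge0$, and $1-\tau\ge0$ for $\tau\in\supp\mu^r(t_e)\subseteq\Omega$), we obtain $\dot X^k_{t_e}\le0$, so the trajectory cannot cross $1$; the symmetric computation rules out crossing $-1$. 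Hence $T^\ast=T$, the truncated and original systems coincide on $[0,T]$, and the $\{\mu^k\}$ produced above solve \eqref{def-cont-eq-multi}. The step I expect to be the main obstacle is the bookkeeping in this joint Lipschitz estimate --- keeping $\psi$ bounded away from $0$ uniformly in $\vec\mu$ and correctly combining the inter-group Lipschitz term $K_d(W_1(\mu^k,\mu^r),\sigma_k)$ with the in-group integral terms --- since everything else is a routine adaptation of the single-population argument.
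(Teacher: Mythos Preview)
Your proposal is correct and follows essentially the same approach as the paper: both establish a joint Lipschitz estimate for the velocity field via the triangle inequality $|W_1(\mu^k,\mu^r)-W_1(\nu^k,\nu^r)|\le W_1(\mu^k,\nu^k)+W_1(\mu^r,\nu^r)$ combined with Assumption~\ref{assump-Lip-pop-ker}, then run a Banach fixed-point argument on the system of continuity equations (the paper citing \cite[Proposition~4.2, Theorem~2.7]{ContEqSystemEx} for the contraction and flow correspondence), and use the same barrier computation at $X^k_{t_e}=\pm1$ to confine supports to $\Omega$. The only organizational difference is that the paper performs the barrier argument \emph{inside} the fixed-point loop (so that the map $S$ preserves $L^\infty([0,t_0],(\mathcal{P}(\Omega))^P)$), whereas you truncate first and remove the truncation afterwards; both orderings work.
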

\begin{proof}  For convenience, we rewrite the system of equations in  \eqref{def-cont-eq-multi} 
\begin{equation}\label{proof-cont-eq-R-multi}
\begin{cases}
\mu^k_t+\Big(V(x,\tilde{\mu})\mu^k\Big)_x=0\\   \mu^k(0,x)=\mu^k_0,
    \end{cases}
\end{equation}
where $\tilde{\mu}=\{\mu^1,\dots,\mu^{P}\}$ and 
\begin{equation}\label{def-V}
 V(x,\tilde{\mu})=  \alpha_k \Bigg(-x+\frac{\sum_{r=1}^P\lambda_rK_d(p_{kr}(t),\sigma_k)J^k(x,\mu^r)}{\sum_{r=1}^P\lambda_rK_d(p_{kr}(t),\sigma_k)\varphi_R^k(x,\mu^r)} \Bigg),
\end{equation}
with $\varphi^k_R$, $J^k$  are defined by  \eqref{def-phiR}, \eqref{def-J}, respectively, for $\kappa_d^k$. 
To prove the existence and uniqueness of solutions to \eqref{proof-cont-eq-R-multi}, we following the proof of Theorem 1.1 in \cite{ContEqSystemEx}.

Note that Assumption \ref{assump-Lip-pop-ker} with the trigonometric inequality in Wasserstein space implies 
\begin{equation*}
\begin{split}
|K_d(p^1_{kr},\sigma_k)-K_d(p^2_{kr},\sigma_k)|&=|K_d(W_1(\mu_1^k,\mu^r_1),\sigma_k)-K_d(W_1(\mu_2^k,\mu^r_2),\sigma_k)|\\ &\leq C|W_1(\mu_1^k,\mu^r_1)-W_1(\mu_2^k,\mu^r_2)|\\&\leq
C(W_1(\mu_1^k,\mu^k_2)+W_1(\mu_1^r,\mu^r_2)).
\end{split}
\end{equation*}
Using this, the uniform bound of the functions $K_d$ and $J^k$  from \eqref{proof-ex-mf-one}, we get 
\begin{equation}\label{proof-multi-eq-Kd}
\begin{split}
        |K_d(p^1_{kr},\sigma_k)J^k(x,\mu^r_1)&-K_d(p^2_{kr},\sigma_k)J^k(x,\mu^r_2)|\\&\leq |K_d(p^1_{kr},\sigma_k)||J^k(x,\mu^r_1)-J^k(x,\mu^r_2)|\\&+
       |J^k(x,\mu^r_2)| |K_d(p^1_{kr},\sigma_k)-K_d(p^2_{kr},\sigma_k)|\\&
       \leq C (W_1(\mu_1^k,\mu^k_2)+W_1(\mu_1^r,\mu^r_2)),
\end{split}
\end{equation}
for all $\tilde{\mu}_1,\tilde{\mu}_2\in (\mathcal{P}(\Rr))^P$. Similarly, using \eqref{proof-one-varphi-Lip} we have
\begin{equation}\label{proof-mmulti-Lip-g}
|K_d(p^1_{kr},\sigma_k)\varphi_R^k(x,\mu^r_1)-K_d(p^2_{kr},\sigma_k)\varphi_R^k(x,\mu^r_2)|\leq C (W_1(\mu_1^k,\mu^k_2)+W_1(\mu_1^r,\mu^r_2)).
\end{equation}
Consequently, recalling that $K_d,\kappa^k_d\geq c$ and the fact that functions $\kappa^k_d$ are Lipschitz in $x$ from the equations in \eqref{def-V}, \eqref{proof-multi-eq-Kd} and \eqref{proof-mmulti-Lip-g}, we deduce that
\begin{equation}\label{proof-mmulti-Lip-V}
|V(x,\tilde{\mu}_1)-V(y,\tilde{\mu}_2|\leq C\Big(|x-y|+\sum_{r=1}^{P} W_1(\mu^r_1,\mu^r_2)\Big),
\end{equation}
for all $x,y\in \Rr$ and $\tilde{\mu}_1,\tilde{\mu}_2\in (\mathcal{P}(\Rr))^P$ .

Let $0<t_0\leq T$ and fix $\tilde{\nu}\in L^\infty([0,t_0],(\mathcal{P}(\Omega))^P)$. Note that  $L^\infty([0,t_0],(\mathcal{P}(\Omega))^P)$ is Banach space with respect to the distance
\begin{equation*}
d(\tilde{\nu}_1(t),\tilde{\nu}_2(t))=\sup_{t\in[0,t_0]}\Big(\sum_{r=1}^{P}W_1(\nu^r_1(t),\nu^r_2(t))\Big).
\end{equation*}

Now, substituting $\tilde{\nu}$ in into \eqref{proof-cont-eq-R-multi}, we consider the following equation 
\begin{equation}\label{proof-cont-eq3}
\begin{cases}
\mu^k_t+\Big(V(x,\tilde{\nu})\mu^k\Big)_x=0\\   \mu^k(0,x)=\mu^k_0.
    \end{cases}
\end{equation}
Note that the equations of the system in \eqref{proof-cont-eq3} are decoupled. Furthermore,  the corresponding vector fields $v(t,x)=V(x,\tilde{\nu}(t))$  are Lipschitz in $x$ (see \eqref{proof-mmulti-Lip-V}), then, 
by \cite[Theorem 5.34]{villani2003topics} the system of equations in \eqref{proof-cont-eq3} has a unique solution, $\{\mu^k\}_{k=1}^{P}\in C([0,t_0],\mathcal{P}^{ac}_c(\Rr))$. 

Next, we prove that $\supp(\mu^k(t))\subseteq \Omega$ for all $t\in [0,t_0]$.
Consider the corresponding flow for \eqref{proof-cont-eq3} 
\begin{equation}\label{pro-proof-eq2}
   \begin{cases}
        \frac{\d X^k_t}{\dt}=-\frac{\alpha_k}{\psi(X_t^k,\nu^k)} \Bigg(\sum_{r=1}^P\lambda_r K_d(q_{kr}(t),\sigma_k)\int_{\Omega} \kappa_d(X_t^k,y,\varepsilon_k)\Big(X_t^k-y\Big)~\d \nu^r(y)\Bigg),
    \\
        X^k_0\sim \mu^k_0,
   \end{cases}
\end{equation}
where $q_{kr}(t)=W_1(\nu^k(t),\nu^r(t))$ and $\psi$ is defined by \eqref{def-Zp-law}.
Recalling that $\supp(\mu^r(0))\subseteq\Omega=[-1,1]$, for all $r=1,\dots,P$,  and using the continuity of the solution to \eqref{pro-proof-eq2}, we denote by $0\leq t_e\leq t_0$ the smallest time for which there exits $k\in\{1,\dots,P\}$ such that $X^k_{t_e}=1$. The proof is completed if such $t_e$ does not exist or $t_e=t_0$. Now, suppose that $0\leq t_e<t_0$, hence, $X^{k}_{t_e}=1$. Consequently, recalling that  $\nu^r(t_e)$ is the push forward of $\nu^r(0)$ with respect $X^k_{t_e}$, we have $\supp(\nu^r(t_e))\subseteq\Omega=[-1,1]$, for all $r=\{1,\dots,P\}$, which yields
\begin{equation*}
    \int_{\Omega} \kappa_d(X_{t_e}^k,y,\varepsilon_k)\Big(X_{t_e}^k-y\Big)~\d \nu^r(y,t_e)=\int_{\Omega} \kappa_d(1,y,\varepsilon_k)\Big(1-y\Big)~\d \nu^r(y,t_e)\geq 0.
\end{equation*}
 Therefore, from \eqref{pro-proof-eq2} it follows that $\dot{X}^k_{t_e}\leq 0$, which implies that  there exists $\Delta t$ such that  $X^k_t\leq 1$, for all  $t\in [0,t_e+\Delta t]$.
 Repeating the same arguments finite time, we conclude that $X^k_t\leq 1$, for all $ k=1,\dots,P$ and  $t\in [0,t_0]$. Similarly, we prove that $X^k_t\geq -1$, for all $ k=1,\dots,P$ and  $t\in [0,t_0]$. Thus,  taking into account that the solution to \eqref{proof-cont-eq3}, $\mu^k(t)$, is the push forward of $\mu_0^k$ through $X^k_t$ and $X^k_t\in\Omega$, we deduce that   $\supp(\mu^k(t))\subseteq\Omega
$ for $ k=1,\dots,P$, hence, $\mu\in L^\infty([0,t_0],(\mathcal{P}(\Omega))^P)$. 

Let $S:L^\infty([0,t_0],(\mathcal{P}(\Omega))^P)\to L^\infty([0,t_0],(\mathcal{P}(\Omega))^P)$ is defined as follows: for all $\tilde{\nu}\in L^\infty([0,t_0],(\mathcal{P}(\Omega))^P)$, $S(\tilde{\nu})=\tilde{\mu}$, where $\tilde{\mu}\in L^\infty([0,t_0],(\mathcal{P}(\Omega))^P)$ is the unique solution to \eqref{proof-cont-eq3}. 

Next, we chose $t_0$ such that the map, $S$, becomes a contraction.
Because in \eqref{proof-cont-eq3} the state variable $x$ belongs to $\Omega$ and $V$ satisfies the Lipschitz condition in \eqref{proof-mmulti-Lip-V} the Proposition 4.2 in \cite{ContEqSystemEx} holds true in our setting as well. Let $\tilde{\nu}_1,\tilde{\nu}_2\in L^\infty([0,t_0],(\mathcal{P}(\Omega))^P)$, $S(\tilde{\nu}_i)=\tilde{\mu}_i$, $i=1,2$. Noticing that $\tilde{\mu}_1$ and $\tilde{\mu}_2$ have the same initial conditions from \cite[Proposition 4.2]{ContEqSystemEx}, we get
\begin{equation}\label{C}
\sup_{t\in[0,t_0]}\Big(\sum_{r=1}^{P}W_1(\nu^r_1(t),\nu^r_2(t))\Big)\leq C t_0 e^{Ct_0} \sup_{t\in[0,t_0]}\Big(\sum_{r=1}^{P}W_1(\mu^r(t),\mu^r_2(t))\Big).
\end{equation}
 Choosing $t_0$ such that $ C t_0 e^{Ct_0} <1$, we obtain that the map  $S$ is a contraction. Therefore, by Banach's fixed point Theorem, we deduce that there exists a unique solution to \eqref{proof-cont-eq-R-multi} in $[0,t_0]$. Finally, note that the constant $C$ in \eqref{C} only depends on problem data, hence, iterating the arguments, we obtain the existence and uniqueness of solutions to \eqref{proof-cont-eq-R-multi} on the whole interval $[0,T]$.

Because the coefficients of the system in \eqref{def-multi-mf-dynamics} are Lipschitz by \cite[Theorem 2.7]{ContEqSystemEx}, the existence and uniqueness of solutions of problems in \eqref{def-multi-mf-dynamics} and \eqref{proof-cont-eq3}.
\end{proof}
 Note that in the dynamics in \eqref{def-multi-mf-dynamics}, the distributions, therefore, population-wise weights, are updated by the system of continuity equations, \eqref{def-cont-eq}. 
 %The next proposition proves that when the number of agents is large, the representative agents actually describe the population's behavior; that is, the solution component, $X_t^k$, of \eqref{def-multi-mf-dynamics} describes the behavior of the sub-group $\Lambda^k$. More precisely, the connection between the isolated model in \eqref{def-multi-dyn-iso} and the integrated model in \eqref{def-multi-mf-dynamics} is established. Particularly,  when the number of agents is large $N>>1$, the solution to the isolated model converges to the solution of the integrated one.
 %\begin{proposition}\label{pro-lim-mf-multi-SDE} Consider the settings of Theorem \ref{teo-ex-mf-multi-SDE}. Let $X^{\Lambda_k,i}_t$, $k=1,\dots,P$, $i=1,\dots,n_k$ solve \eqref{multi-p-decomp} and 
 %$X^k_t$,  $k=1,\dots,P$ solve \eqref{def-multi-mf-dynamics}.    Then, 
%\begin{equation}
%\lim_{N\to\infty}\sup_{i=1,2,\dots,n_k}\mathrm{E}\Big[\sup_{t\in[0,T]}|X^{\Lambda_k,i}_t-X^k_t|\Big]=0.
%\end{equation}
%\end{proposition}
%\begin{proof} Recall that the empirical distribution of sub-population $\Lambda_k$ is 
%\begin{equation}
%\bar{\mu}_t^{\Lambda_k}=\frac{1}{n_k}\sum_{i=1}^{n_k}\delta_{X^{\Lambda_k,i}_t},
%\end{equation}
%and the actual distinction is $\mu^k_t$.    
%\end{proof}
Next, we present an example that demonstrates a scenario where the isolated and integrated models lead to different behaviors due to the small number of agents.

\begin{example}\textbf{Isolated and Integrated Models.}\ Let we are given two populations whose distributions are drowned in the first plot of the second row of Figure \ref{integrated}. As one can see the actual distributions of these two populations are quite different, and their Wasserstein distance is quite big. However, taking a small number of agents from each population (taking a small number of samples from the corresponding distributions), it may happen that all considered agents' opinions are close to each other’s; that is, their empirical distinctions are close to each other, see the first plot of the first row of Figure \ref{integrated}. Hence, in the case of the isolated model, \eqref{def-multi-dyn-iso}, these two populations converge, the first row of Figure \ref{integrated}. In contrast, the integrated model, \eqref{def-multi-mf-dynamics}, leads to polarization due to the big difference of the actual distributions,  the second row of Figure \ref{integrated}.  
\begin{figure}[h]
    \centering
\includegraphics[width=0.8\textwidth]{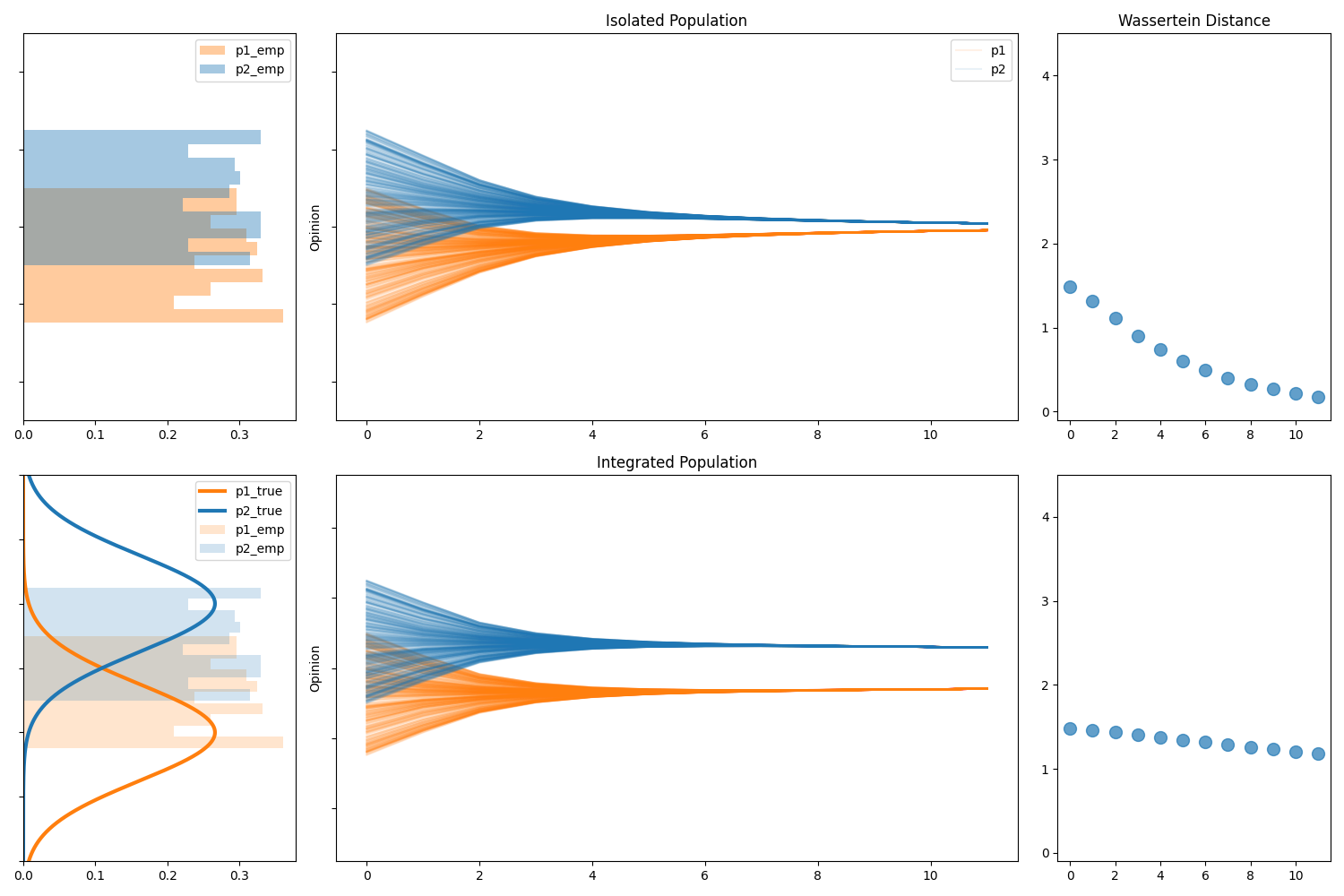}
    \caption{Isolated and Integrated Population: empirical initial opinion profiles $\mu_0^{p1_{emp}}$ and $\mu_0^{p2_{emp}}$ are sampled from uniform distribution, $N=200,\,\ \alpha=0.5,\,\ \varepsilon=0.5, \,\ \kappa_d=uniform\ kernel$, see equation in \eqref{uniform}. }
    \label{integrated}
\end{figure}
\end{example}

\section{Multi-Identity  Multi-Population Model} \label{sec:5}
In this section, we introduce a multi-identity multi-population model. Unlike the models in the previous section, here, a single agent may have multiple identities (e.g., gender, race, political orientation), and the influence of these identities on opinion formation may vary depending on the focal issue.

In the preceding sections, the whole population is assumed to be divided into unique, mutually exclusive sub-populations. So, the agents are assumed to be part of a uniquely given sub-population, and their opinions are affected by how these sub-populations interact (based on the distribution-driven dynamics). In reality, people are part of many different groups: gender, ethnic group, age, political orientation, etc. However, in general, most of the group affiliations are neglectable for a given focal issue. For example, in the case of  Russia-Ukraine war, the nationality of people and their political orientations play crucial role, and the effect of other group affiliations may be neglected. Combining identity-invariant and time-varying models, we introduce an opinion dynamics model, which considers multi-group affiliation as well. 

In our setting, a group affiliation corresponds to a single partition of the population, and multi-group affiliations corresponds to several partitions of the population. Particularly, let  $I$ be the set of agents. Consider $(M+1)$ different partitions of $I$, each of the partition corresponds to different group affiliations of agents, and let $\mathcal{M}$ is be the set of all partitions. We denote  $\Lambda_r^k$ the $k$-th sub-population of the $r$-th  partition and by $\Lambda_r(i)$, we denote the sub-population of agent $i$ for $r$-th  partition. For $r=0,1,\dots,M$, we have $\Lambda_r^k\cap\Lambda_r^j=\emptyset$ for $k\neq j$ and   
$\cup_{k=1}^{P_r}\Lambda_r^k=I$, where $P_r$ is the size of the partition or the number of sub-populations with respect to $r$ division.

\begin{equation*}
%\label{def-multi-dyn-multi-group}
\begin{cases}
     \frac{\d X^i_{t}}{\dt}
    =-\alpha_i X^i_t +\frac{\alpha_i}{Z_M} \sum_{j=1}^{N} K(\mathcal{M})\kappa_d(X^i_{t},X^j_{t},\varepsilon_i)X^j_{t}\\
    X^{i}_0 \sim \bar{\mu}^i_0,
\end{cases}
\end{equation*}
where 
\begin{equation*}
%\label{def-coeff-multi-group}
K(\mathcal{M})=\sum_{r=0}^{M} \gamma_r Q^r(W_p(\mu^{\Lambda_r(i)}_t,\mu^{\Lambda_r(j)}_t),\sigma^r_i),\,\ Z_M=\sum_{j=1}^{N} K(\mathcal{M})\kappa_d(X^i_{t},X^j_{t},\varepsilon_i),
\end{equation*}
with $Q_r$ is being either identity invariant kernel, see Section \ref{sec:2}, or time-variant kernel, see Section \ref{sec:3}; that is,
\begin{equation*}
Q_r= \beta_{r}(t)K^r_d(W_p(\mu^{\Lambda_r(i)}_0,\mu^{\Lambda_r(j)}_0),\sigma^r_i)\,\ \text{or}\,\ Q_r=K^r_d(W_p(\mu^{\Lambda_r(i)}_t,\mu^{\Lambda_r(j)}_t),\sigma_i).
\end{equation*}
In this model, we assume that for the given focal issue, the group affiliations that have the most effect on opinion formation have bigger coefficients $\gamma$. 

%Next, we describe an
 %an example of this model. Assume that the effect of the first group affiliation is described by the time-varying model (see Section \ref{sec:3}), the second term in \eqref{def-coeff-multi-group}. Furthermore, we assume that the effects of the rest of the group affiliations over time become less and less important. Thus, these group affiliations effects are modeled according to the identity-invariant model (see Section \ref{sec:2}), the first term in \eqref{def-coeff-multi-group}. \textcolor{red}{Notes Here: $\beta$ can control which partition has the most effects on the focal issue}

\begin{example}\textbf{Group-Identity Dominance on Opinion Evolution.}
In this example, we consider multi-identity 
 multi-population model with to identity invariant kernels. 
 
Consider a population of agents that have multi-group affiliations (see Figure \ref{Partitions}). The first group affiliation gives two sub-groups that have opinions distributed neutrally but with different variations, while the second affiliation corresponds to ideology partitioning (i.e, whether or not an agent's initial opinion is larger than $0$). The group identities coming from different group affiliations affects agents' interactions differently. Here, we're using the parameter $\gamma$ to quantify the influence from different group identities: higher value of $\gamma$ implies the dominance of the second group affiliation, and vice versa.

In the first column of Figure~\ref{multi_dim}, it shows the overall flow of opinions without distinguishing between group identities. The second and third columns highlight the opinion evolution with different partitioning.

With increasing $\gamma$, the first group affiliations begin to lose their influence, depicted in the second column, we observe the emergence of opinion clusters with mixing colors (orange and blue). In contrast, the third column of Figure~\ref{multi_dim} reflects a population where ideological divisions are more pronounced. When $\gamma$ is high, indicating the dominance of the second group affiliation, agents' opinions polarize significantly around the ideology's stance. Here, opinions are sharply divided, with little overlap between the groups, illustrating the impact of strong ideological group identities on social consensus and cohesion.

\begin{figure}[h]
    \centering
\includegraphics[width=0.7\textwidth]{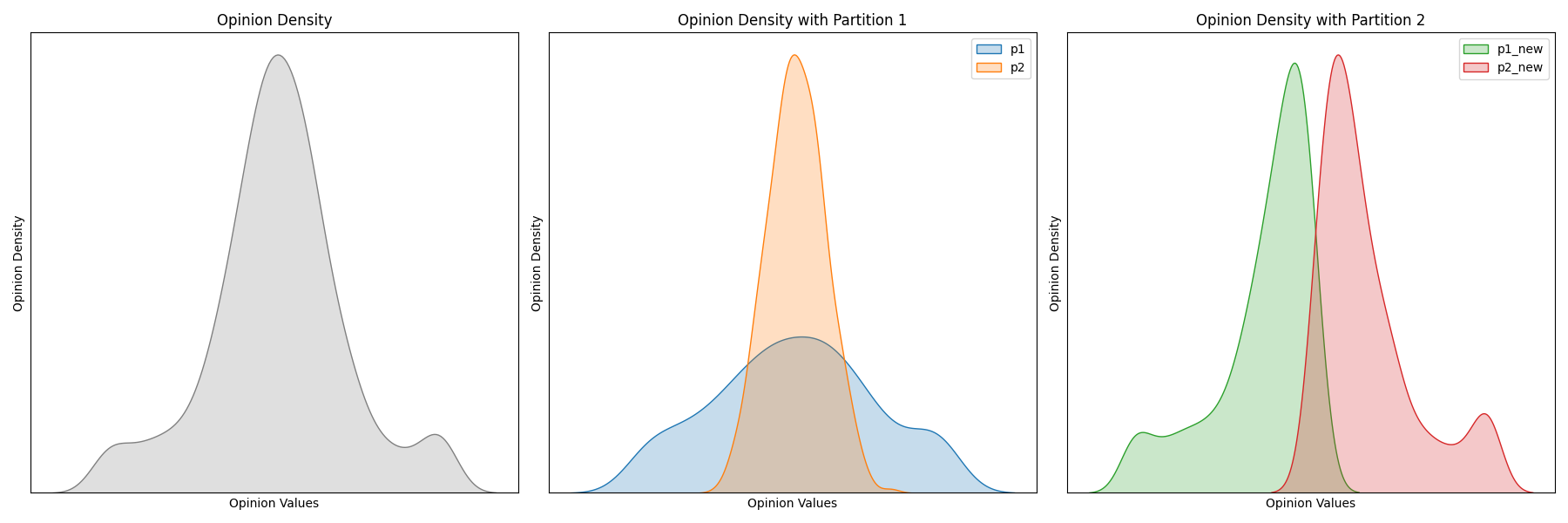}
    \caption{Opinion Distribution with Different Partitions.}
    \label{Partitions}
\end{figure}

\begin{figure}[h]
    \begin{subfigure}{0.9\textwidth}
    \centering
\includegraphics[width=0.9\textwidth]{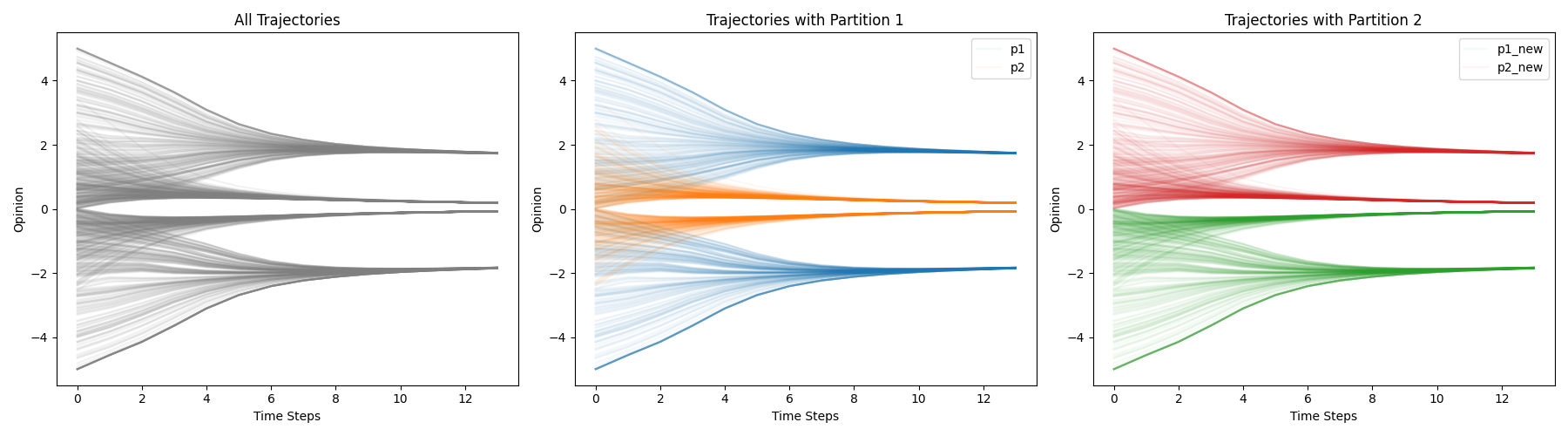}
     \caption{$\beta=0.1$}
    \end{subfigure}
    \begin{subfigure}{0.9\textwidth}
    \centering
\includegraphics[width=0.9\textwidth]{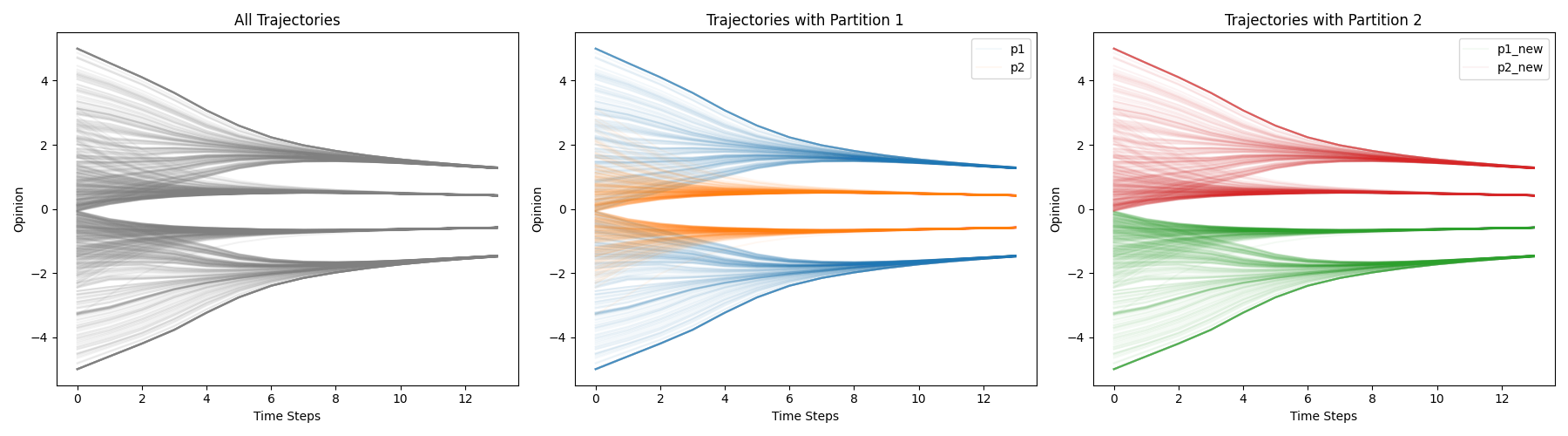}
     \caption{$\beta=0.5$}
    \end{subfigure}
    \begin{subfigure}{0.9\textwidth}
    \centering
\includegraphics[width=0.9\textwidth]{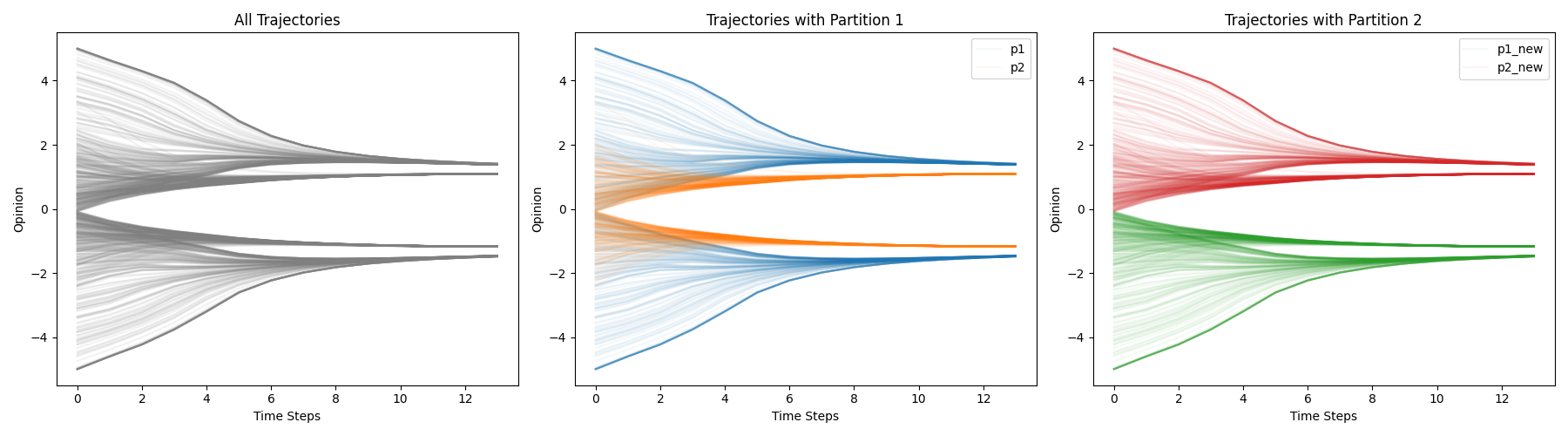}
     \caption{$\beta=0.9$}
    \end{subfigure}
    \caption{Opinion Trajectories of Different Partitions with $\gamma = 0.1,\ 0.5,\ 0.9$.}
    \label{multi_dim}
\end{figure}
\end{example}

%\begin{remark} Note that in \eqref{def-multi-dyn-multi-group} may consider several time-varying coefficients corresponding to different group affiliations. This case, we plan to study in our future work.   
%\end{remark}

\section{Case Study}\label{sec:4}
In this section, we present several numerical experiments using the models introduced in the preceding sections.  Each of the examples captures certain social phenomenon.

\subsection{Decaying Effects} The phenomenon of opinion dynamics with decaying effects relates to individuals gradually \textit{reverting to their original social identities after active involvement in discussions}. The study of such effects has significant implications in modelling public opinion on break out event (e.g., a significant political development, natural disaster, or technological innovation). Given that such events often generate an immediate, intense focus of public attention, it often induces a transient structural pattern in public opinion, manifesting as either consensus or polarization. However, as the fervor surrounding the event decays, these transient patterns tend to evolve towards more "stable" configurations that are more closely aligned with individuals' original social identities  (see for example \cite{fatigue}). We capture such decaying effects by adding a time-decaying factor on the population kernel:
\begin{equation*}
K(\mu^{\Lambda(i)}_0,\mu^{\Lambda(j)}_0,\sigma_i)= 
\begin{cases} \exp (-\Gamma(t)  W_p(\mu^{\Lambda(i)}_0,\mu^{\Lambda(j)}_0))
, \quad &W_p(\mu_{\mathcal{X}^i_t},\mu_{\mathcal{X}^j_t}) \leq \sigma_{i}\\
        0,   \quad & \text{otherwise}.
    \end{cases}
\end{equation*}
where $\Gamma(t)$ is a monotonically increasing function of time $t$ with $\Gamma(0) > 0$.

In particular, the population consists of three sub-groups: two extreme ($p1$ and $p2$) and one neutral ($p3$) sub-populations (see Figure \ref{decay effect}). The decaying effect is imposed on the neutral population to model its "reaction" to a break-out event. As shown in Figure \ref{decay effect}, a continuous active interaction between neutral and extreme populations lead to a break-down of the neutral population: it loses its \textit{neutrality} identity and arrives at a polarization configuration at steady state. On the other hand, when the decaying effect is imposed, a transient polarization pattern (from time step 0 to 8) is replaced by a new consensus configuration, indicating that the group reverting back to its original neutrality identity.
\begin{figure}[h]
    \centering
    \includegraphics[width=0.8\textwidth]{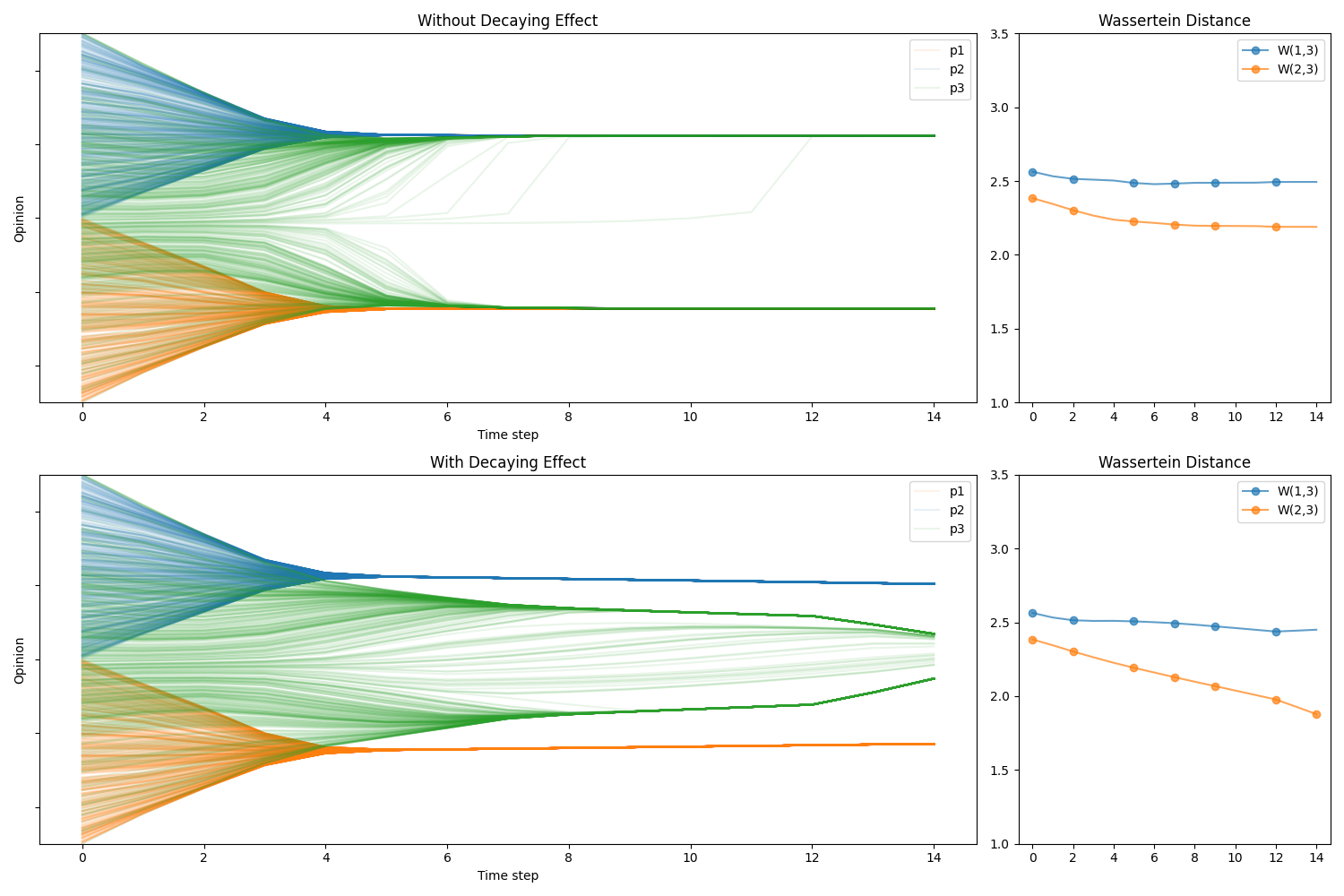}
    \caption{Decaying Effect: for $p_1$ and $p_2$, initial opinion profiles $\mu_0^{p1}$ and $\mu_0^{p2}$ are sampled from truncated Gaussian distribution and $p_3$ is sampled from uniform distribution, $N=400, \, \alpha=0.5, \, \varepsilon=0.5,\, \kappa_d=uniform\ kernel$, see equation in \eqref{uniform}. }
    \label{decay effect}
\end{figure}

\subsection{Reducing  polarization} Here, we reproduce the social experiment conducted in \cite{balietti2021reducing} by using identity invariant model (see Section \ref{sec:3}).

In \cite{balietti2021reducing} conducted social experiment about wealth distribution in US. The results show that political orientations of participants plays crucial rule of the opinion formation and leads to polarization.
The experiment demonstrated that interactions between participants with opposing views led to reduced feelings of closeness, thereby diminishing their impact on each other.
In contrast, the interactions among participants with consistent views increased closeness, therefore, the influence. The study suggests that the influence of individuals from different political parties on each other is relatively negligible compared to interactions within the same political party. However, the authors observed that introducing participants from different parties to each other in non-political matters increases the interaction between them, hence, reducing the polarization.  

We consider two populations, denoted as $p1$ (orange) and $p2$ (blue), which observe each other but do not interact due to a large group bias, as illustrated in the first row of Figure \ref{Polarization Reduction}.  To expose each other's opinions, we introduce the third population, $p3$ (green). There is no population bias between populations $p1$ and $p3$, as well as between populations $p2$ and $p3$. The representatives of population $p3$ are stubborn, meaning they influence representatives of other populations but remain unchanged themselves. In other words, this population serves as an information transferrer. So, we make populations $p1$ and $p2$ take into account each other's opinions through the population $p3$. As shown in the second row of Figure \ref{Polarization Reduction}, the information explosion reduces the polarization and may lead to the consensus. However, with a strong bias between the two populations, achieving consensus becomes unrealistic, meaning that while polarization is reduced, the populations remain polarized. We simulate this behavior by considering the threshold below kernel (see Section \ref{sec:3}), see the third row of Figure \ref{Polarization Reduction}. Specifically, when populations $p1$ and $p2$ come too close to each other, the group bias is activated, causing them to stop interacting with population $p3$. Hence, they stay polarized.
 
\begin{figure}[h]
    \centering
\includegraphics[width=0.9\textwidth]{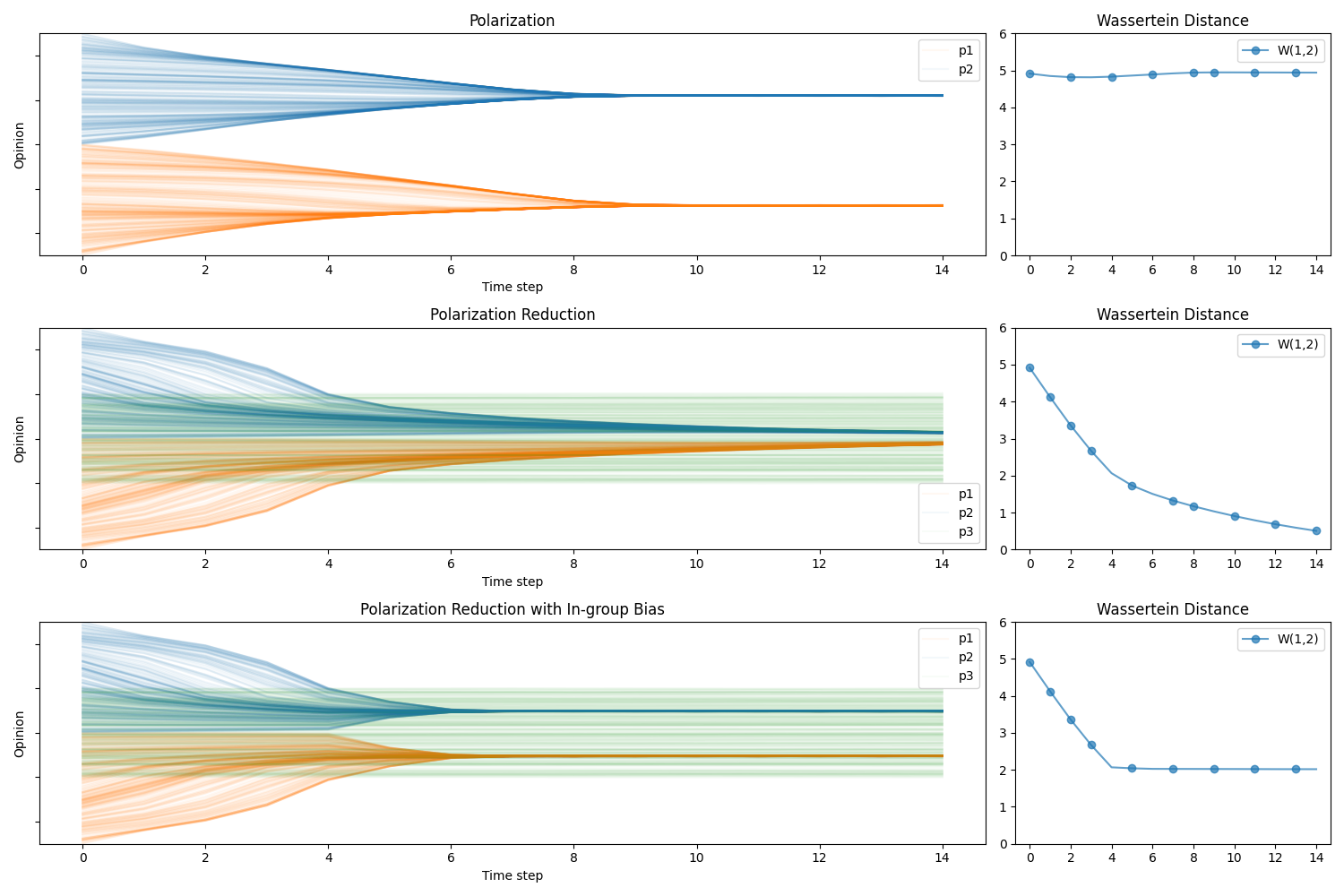}
\caption{Polarization Reduction.}
\label{Polarization Reduction}
\end{figure}

%\subsection{Time-varying model isolated population: whole access to others state, bonded scope case (local information) } \textcolor{red}{Having code implementation problem: need empirical density estimation to deal with imbalanced distribution but the empirical estimates is poor when the local scope only has a few agents}

%\section{Conclusions and Future Works}
%\subsection{Acknowledgements}

%\subsection{Some Notes (\textcolor{red}{not for the paper})}
%\begin{itemize}
%    \item Throughout this paper, we've tried to create a model that emphasizes \textbf{uniformity} by incorporating concepts such as norms and Wasserstein metrics, as well as techniques like renormalization and kernels. Our aim extends beyond simply fitting the model to specific scenarios or data sets; more critically, we are interested in the model's ability to generalize and make predictions under defined conditions. In this context, mathematical and physical concepts serve as analogs for cognitive and social principles, enhancing the model's applicability across diverse settings.
%\end{itemize}

\bibliographystyle{ieeetr}
\bibliography{references}

\end{document}